\DeclareSymbolFont{cyrletters}{OT2}{wncyr}{m}{n}
\DeclareMathSymbol{\Sha}{\mathalpha}{cyrletters}{"58}
\theoremstyle{plain}
\newtheorem{theorem}{Theorem}[section]
\newtheorem{conjecture}[theorem]{Conjecture}
\newtheorem{lemma}[theorem]{Lemma}
\newtheorem{proposition}[theorem]{Proposition}
\theoremstyle{definition}
\theoremstyle{remark}
\newtheorem*{remark}{Remark}
\numberwithin{equation}{section}
\newcommand{\R}{\mathbb R}
\newcommand{\N}{\mathbb N}
\newcommand{\C}{\mathbb C}
\newcommand{\Q}{{\mathbb Q}}
\renewcommand\Re{\operatorname{Re}}
\def\P{ {\bf P}}
\def\({\left(}
\def\){\right)}
\def\<{\left<}
\def\>{\right>}
\newcommand{\abs}[1]{\left|#1\right|}
\def\cI{\mathcal{I}}
\begin{document}
\title[the Andrews-Zagier asymptotics]{ 
On the Andrews-Zagier asymptotics for partitions without sequences}
\author{Kathrin Bringmann}
\address{Mathematical Institute, University of Cologne, Weyertal 86-90, 50931 Cologne, Germany}
\email{kbringma@math.uni-koeln.de}
\author{Daniel Parry}
\address{Department of Mathematcs, Drexel University, Philadelphia, PA 19104}
\email{dtp29@drexel.edu}
\author{Robert Rhoades}
\address{Center for Communications Research, 805 Bunn Dr., Princeton, NJ 08540,U.
S.A. }
\email{rob.rhoades@gmail.com}
\date{\today}
\maketitle

\section{Introduction and statement of results}
Holroyd, Liggett, and Romik \cite{HLR}  introduced the following probability models:
Let $0<s<1$ and 
$\mathcal{C}_1, \mathcal{C}_2, \cdots$ be independent events with probabilities
$$\P_s(\mathcal{C}_n):= 1-e^{-ns}$$
under a certain probability measure $\P_s$.
Let $A_k$ be the event
$$A_k: = \bigcap_{j=1}^\infty \left( \mathcal{C}_j \cup \mathcal{C}_{j+1} \cup \cdots \cup \mathcal{C}_{j+k-1}\right)$$
that there is no sequence of $k$ consecutive $\mathcal{C}_j$ that do not occur.
With $q:=e^{-s}$ set 
\begin{equation*}
g_k(q):= \P_s(A_k).\end{equation*}
To solve a problem in bootstrap percolation, 
Holroyd, Liggett, and Romik established an asymptotic for $\log(g_k(e^{-s}))$. 

Interestingly, the above described probability model also appears in the study of
integer partitions \cite{AEPR, HLR}. In particular, 
\[
G_k(q)=g_k(q)  \prod_{n=1}^\infty \frac{1}{1-q^n} 
\]
is the generating function for the number of integer partitions
without $k$ consecutive part sizes.  Partitions without 2 consecutive parts
have a celebrated history in relation to the famous Rogers-Ramanujan 
identities.  See MacMahon's book \cite{macmahon} 
or the works of Andrews \cite{And67a, And67b, AndPNAS} 
for more about such partitions. 

Andrews \cite{AndPNAS} found that the key to understanding the
function when $k=2$ lies in Ramanujan's mock theta function
$$\chi(q): = 1 + \sum_{n=1}^\infty \frac{q^{n^2}}{\prod_{j=1}^n \left(1-q^j + q^{2j}\right)}.$$ Namely, he proved that
$$g_2(q) = \chi(q)  \prod_{n=1}^\infty \frac{\left(1+q^{3n}\right)}{\left(1-q^n\right) \left(1-q^{2n}\right)}.$$
From this, an asymptotic expansion for $g_2(e^{-s})$ may obtained, 
see \cite{BM1}.
  Using additional $q$-series identities when $k>2$, Andrews made the following conjecture.
\begin{conjecture}[Andrews \cite{AndPNAS}]\label{conj:main} For each $k\ge 2$, there exists a positive constant $C_k$ such that, as $s\to 0$,
$$g_k\left(e^{-s}\right) \sim C_k s^{-\frac{1}{2}} \exp\( - \frac{\pi^2}{3k(k+1)s}\).$$
\end{conjecture}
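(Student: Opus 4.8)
The plan is to prove the refined asymptotic by producing an exact $q$-series representation of $g_k$ and then extracting its behaviour as $s\to 0^+$ via the Laplace method and the circle method, following \cite{BM1} in the case $k=2$; since Holroyd, Liggett and Romik already pin down the exponential order $\exp(-\pi^2/(3k(k+1)s))$, the task is to produce the sharp prefactor $C_k\,s^{-1/2}$ and to show $C_k>0$. So the first step is to derive a workable closed form for $g_k(q)$. The prohibition of $k$ consecutive part sizes is governed by a $k\times k$ transfer operator acting on the alphabet of part sizes $1,2,3,\dots$, and unwinding it (or invoking Andrews's $q$-difference equations from \cite{AndPNAS}, which for $k=2$ produce the identity involving the mock theta function $\chi$) should give a representation of the form $g_k(q)=\Pi_k(q)\,R_k(q)$, where $\Pi_k$ is an explicit eta-quotient and $R_k(q)=\sum_{\mathbf n\ge \mathbf 0}\varepsilon(\mathbf n)\, q^{Q_k(\mathbf n)+\ell_k(\mathbf n)}\big/\prod_j (q;q)_{n_j}$ is a Nahm-type sum (of rank $k-1$, say, possibly with $(-q^a;q^a)_{n_j}$-type factors and signs $\varepsilon(\mathbf n)=\pm1$ as for $k=2$), with $Q_k$ positive definite on the relevant cone. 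Putting such an identity into a form adapted to asymptotics is, for $k>2$, the first genuine task.

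Next, set $q=e^{-s}$. The eta-quotient is handled by the modular transformation of Dedekind's $\eta$: each factor $(e^{-ms};e^{-ms})_\infty$ contributes $\sqrt{2\pi/(ms)}\,e^{-\pi^2/(6ms)}$ up to an exponentially small error, so $\Pi_k(e^{-s})\sim\gamma_k\, s^{\delta_k}\exp(\mu_k/s)$ with $\gamma_k>0$ and $\delta_k,\mu_k$ explicit. For $R_k$ I would apply Euler--Maclaurin to $\log(q^{n+1};q)_\infty=-\sum_{m\ge1}\frac{q^{(n+1)m}}{m(1-q^m)}$ (and to the analogous identity for $(-q^a;q^a)_n$), giving, for $sn$ in a fixed compact subset of $(0,\infty)$, the uniform expansion $\log(q;q)_n=-\frac{\pi^2}{6s}+\frac{1}{s}\mathrm{Li}_2(e^{-sn})+\frac12\log\frac{2\pi}{s}-\frac12\log(1-e^{-sn})+O(s)$, plus crude bounds for $sn$ near $0$ or large. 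Substituting $\mathbf n=\mathbf x/s$ turns each summand into $a_k(\mathbf x)\exp(\Phi_k(\mathbf x)/s)$, with $\Phi_k(\mathbf x)$ built from dilogarithms at $e^{-x_j}$ (and $-e^{-x_j}$) minus $Q_k(\mathbf x)$; converting $\sum_{\mathbf n}$ to an integral against $s^{-(k-1)}\,d\mathbf x$ and running the Laplace method at the interior critical point $\mathbf x^\star$ of $\Phi_k$ yields $R_k(e^{-s})\sim c_k\, s^{\eta_k}\exp(\Phi_k(\mathbf x^\star)/s)$, where $s^{\eta_k}$ is the power surviving after the $s^{-(k-1)}$ from the sum-to-integral, the $k-1$ Pochhammer half-powers, and the $s^{(k-1)/2}$ from the Gaussian all combine.

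Multiplying, $g_k(e^{-s})\sim C_k\, s^{\delta_k+\eta_k}\exp\!\big((\mu_k+\Phi_k(\mathbf x^\star))/s\big)$ with $C_k=c_k\gamma_k$, so the proof reduces to two identities: $\mu_k+\Phi_k(\mathbf x^\star)=-\frac{\pi^2}{3k(k+1)}$ and $\delta_k+\eta_k=-\frac12$. The first is a dilogarithm evaluation: the critical-point (``Nahm/Bethe'') equations $1-e^{-x_j^\star}=\exp(-\partial_{x_j}Q_k(\mathbf x^\star))$ should be solvable in closed form with $e^{-x_j^\star}$ expressed through ratios of $\sin(\pi j/(k+1))$, and the resulting $\mathrm{Li}_2$-sum telescopes, the telescoping reflecting $\frac{1}{k(k+1)}=\frac1k-\frac1{k+1}$; alternatively this action should coincide with the Holroyd--Liggett--Romik variational quantity, so the exponent can be imported from \cite{HLR}. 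Positivity of $C_k$ then follows once $-\nabla^2\Phi_k(\mathbf x^\star)$ is checked to be positive definite (i.e.\ $\mathbf x^\star$ a strict interior maximum of $\Phi_k$), since $C_k=a_k(\mathbf x^\star)\gamma_k (2\pi)^{(k-1)/2}/\sqrt{\det(-\nabla^2\Phi_k(\mathbf x^\star))}>0$.

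The main obstacle is twofold. First, the algebra: solving the Nahm/Bethe system and evaluating its action in closed form uniformly in $k$ so as to land exactly on $\pi^2/(3k(k+1))$, together with the parallel bookkeeping that forces the powers of $s$ to collapse to exactly $s^{-1/2}$ --- these are rigid structural features of this particular family, not formal outputs of the method, and for $k>2$ they rest on first getting the identity above into a usable shape. Second, the analysis: controlling the tails of $\sum_{\mathbf n}$ (small $\mathbf n$ contribute only polynomially in $1/s$; large $\|\mathbf n\|$ is killed by positive definiteness of $Q_k$), controlling the Pochhammer expansion near the boundary of the cone, and --- if $R_k$ is genuinely mock, as $\chi$ is for $k=2$ --- feeding in its non-holomorphic completion and running Wright's circle method as in \cite{BM1}. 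I expect the closed-form solution of the Nahm/Bethe equations and the accompanying dilogarithm identity to be where the real work lies.
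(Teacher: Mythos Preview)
Your proposal is a research plan, not a proof, and it takes a completely different route from the paper.  The paper does not build a Nahm-type multisum or run a multidimensional Laplace method; it starts from Andrews's explicit identity ((3.3) of \cite{AndPNAS}) expressing $g_k$ as a \emph{single} sum of Jacobi theta values,
\[
g_k(q)=\frac{1}{(q^k;q^k)_\infty}\sum_{m\ge 0}\frac{(-1)^m q^{km(m+1)/2}\,(q^{k+1-km};q^{k+1})_\infty}{(q^k;q^k)_m}\,\theta\!\left(q^{km},q^{k(k+1)/2}\right),
\]
applies the theta inversion formula termwise to peel off $e^{-\pi^2/(2k(k+1)s)}$ (the eta-transformation on the prefactor supplies the remaining $e^{-\pi^2/(6k(k+1)s)}$), and recognizes the surviving one-variable sum as the real part of Wright's generalized Bessel function $\phi\!\left(\tfrac{k}{k+1},1;\,w\,e^{\pi i k/(k+1)}\right)$ with $w\asymp s^{-1/(k+1)}$.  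Wright's classical asymptotics for $\phi$ then give the leading constant $(k+1)/k$ directly --- no dilogarithm identity, no Bethe critical point --- and the power $s^{-1/2}$ drops out of the theta transformation rather than from any power-counting.

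Your plan has the two genuine gaps you yourself flag.  First, the Nahm-type representation $g_k=\Pi_k R_k$ with a positive-definite $Q_k$ is asserted but never produced for $k>2$; ``unwinding the transfer matrix'' or Andrews's $q$-difference equations does not obviously yield such a sum, and none is known in the literature for general $k$.  Second, the evaluation $\mu_k+\Phi_k(\mathbf x^\star)=-\pi^2/(3k(k+1))$ and the bookkeeping $\delta_k+\eta_k=-\tfrac12$ are stated as targets, not verified --- and without the first step they cannot even be formulated precisely.  The paper sidesteps both obstacles: Andrews's theta-sum identity replaces your hypothetical Nahm sum, and the theta/eta inversions replace your saddle-point analysis, so the Wright-function route is far shorter and actually complete.
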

This conjecture proved difficult to establish via standard $q$-series
techniques.  The asymptotic of \cite{HLR} was improved by Mahlburg and the first
author \cite{BM} 
$\log(g_k(e^{-s}))$.  Finally, Kane and the third author 
\cite{KR}, using a technique similar to the transfer matrix method of
statistical mechanics, proved Conjecture \ref{conj:main} with $C_k =
\sqrt{2\pi}/{k}$.

Zagier \cite{zagierPrivate}, using a formula for $g_k$ found by Andrews \cite{AndPNAS}, did extensive 
computations of these asymptotics. He numerically found that, as $s\rightarrow 0$,
$$g_3(e^{-s}) \sim \sqrt{\frac{2\pi}{s}} e^{-\frac{\pi^2}{36s} + \frac{s}{24}  } \( \frac{1}{3} + c_1 s^{\frac{1}{3}}
t_1(s) + c_2 s^{\frac{2}{3}} t_2(s)\),$$
where 
{\small 
\begin{align*}
t_1(s) := &1 -\frac{7}{2^6 3}s -\frac{97}{2^8 3^3}s^2 -\frac{40061}{2^{15} 3^4}s^3 -\frac{18915331}{2^{19} 3^6 5}s^4 -\frac{13796617247}{2^{27} 3^6 5}s^5  - \cdots, \\
t_2(s) :=& 
5  -\frac{29}{2^4 3}s + \frac{19435}{2^{11} 3^3}s^2  -\frac{14885}{2^{12} 3^3} s^3 
+ \frac{51970999}{2^{18} 3^6}s^4 -\frac{28436136277}{2^{24} 3^7 5} s^5 + \cdots,
\end{align*}
}
and 
\begin{equation}\label{constantsZagier}
c_1: = 
 \frac{3^{-\frac{1}{6}} \Gamma\(\frac{1}{3}\) }{8\pi } \  \ \ \text{ and } \ \ \ c_2 := 
 \frac{3^{\frac{1}{6}} \Gamma\(\frac{2}{3} \)}{32 \pi}.
\end{equation}
 The
 computations of Zagier are tantalizing because of the rational values
 appearing in the expansion of $t_1(s)$ and $t_2(s)$ and curious
 because of the powers of $s^{1/3}$ which are atypical in similar
 partition problems. 
Additionally, modular forms arise as generating functions in many partition problems.  
 Knowing that certain generating functions are modular gives one access to deep theoretical tools to prove results in other areas.
On the other hand    proofs of modularity   of $q$-hypergeometric series currently fall
far short of a comprehensive theory to describe the interplay
between them   and automorphic forms.
A recent conjecture of W. Nahm   \cite{Na}
relates the modularity of such series to K-Theory. 
In the situation of interest for this paper \rm with the exception of the case $k=2$, there is no such modular  picture for these generating functions which makes this case much harder. 

We establish Zagier's numerics and its generalizations for all $k$ 

\begin{theorem}\label{thm:main}  
For every $k\in \mathbb{N}$ \text{with} $k>1$, and $N\in \mathbb{N}_0$, we have, as $s\rightarrow 0$,
\begin{align*}
g_k(q) =  \frac{1}{k+1} \sqrt{\frac{2\pi }{s}}e^{-\frac{\pi^2}{3 k(k+1)s} + \frac{s}{24}} \(
\frac{k+1}{k} + \sum_{j=1}^{k N} \beta_{k}(j) s^{\frac{j}{k}} +O\(s^{N}\) 
\),
\end{align*}
where $$\beta_k(j) := b_{k}(j)(k+1)^{-j}k^{\frac{j(k+1)}{k}} 
+ \sum_{kr+\ell = j} b_{k}(\ell)
  \sum_{n=1}^\infty a_{n,r}(-\ell)^n (k+1)^{n-\ell} k^{\frac{\ell(k+1)}{k} - n} $$ 
with 
\begin{equation}\label{bk}
b_{k}(j) :=\frac{k+1}{k\pi j! } (-1)^{j+1} \sin \left( \frac{\pi j(k-1)}{k}\right)\Gamma\left(\frac{j(k+1)}{k}\right)
\end{equation}
and the $a_{n,r}$ are rational numbers defined in \eqref{eqn:a_n_j_def}. 
Moreover, 
 for each $0< j < k$ and $m\in \N$ the values $\beta_k(j+mk)/\beta_k(j) \in \Q$.   
\end{theorem}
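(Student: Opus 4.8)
The plan is to read the claim off the closed formula for $\beta_k(j)$ by isolating its single source of irrationality. Fix $j_0$ with $0<j_0<k$ and put $j=j_0+mk$. Every pair $(r,\ell)$ in the sum defining $\beta_k(j)$ has $r\ge1$, $\ell\ge1$ and $kr+\ell=j$, so $\ell\equiv j_0\pmod k$, i.e.\ $\ell=j_0+m'k$ with $0\le m'\le m-1$; in particular, for $m=0$ the sum is empty and
\[
\beta_k(j_0)=b_k(j_0)\,(k+1)^{-j_0}\,k^{\frac{j_0(k+1)}{k}},
\]
which is nonzero because $\Gamma\!\left(\tfrac{j_0(k+1)}{k}\right)>0$ and $\sin\!\left(\tfrac{\pi j_0(k-1)}{k}\right)\ne 0$ (as $\gcd(k,k-1)=1$ and $0<j_0<k$ forbid $k\mid j_0(k-1)$). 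Hence $\beta_k(j_0+mk)/\beta_k(j_0)$ is well defined, and it suffices to prove that $\beta_k(j_0+mk)$ is a rational multiple of $k^{j_0/k}b_k(j_0)$.

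To extract this common factor, observe that for any integer $a\equiv j_0\pmod k$, say $a=j_0+tk$, one has $\tfrac{a(k+1)}{k}=(j_0+tk+t)+\tfrac{j_0}{k}$, hence $k^{a(k+1)/k}=k^{j_0/k}\cdot k^{j_0+tk+t}$ with integer exponent. Applying this to the leading term ($a=j_0+mk$) and, together with the extra factor $k^{-n}$, to each $k^{\ell(k+1)/k-n}$ in the sum ($a=\ell$), every power of $k$ occurring in $\beta_k(j_0+mk)$ becomes $k^{j_0/k}$ times an integer power of $k$. For the $b_k$-factors, the periodicity $\sin(\theta+\pi\Z)=\pm\sin\theta$ cancels the sine, while $\Gamma(x+1)=x\Gamma(x)$ turns $\Gamma(j_0(k+1)/k+m'(k+1))/\Gamma(j_0(k+1)/k)$ into a Pochhammer product of rationals; together with the ratios of factorials and the sign $(-1)^{j+1}$ this gives
\[
\frac{b_k(j_0+m'k)}{b_k(j_0)}=(-1)^{m'}\,\frac{j_0!}{(j_0+m'k)!}\,\prod_{i=0}^{m'(k+1)-1}\!\left(\frac{j_0(k+1)}{k}+i\right)\ \in\ \Q .
\]
Substituting these two observations into the definition collapses $\beta_k(j_0+mk)$ to $k^{j_0/k}b_k(j_0)$ times a $\Q$-linear combination of $1$ and of the inner sums $\sum_{n\ge1}a_{n,r}\bigl(-\tfrac{\ell(k+1)}{k}\bigr)^n$ (one for each admissible pair $(r,\ell)$), whose coefficients are explicit rationals assembled from powers of $k$ and $k+1$, factorials, and the Pochhammer products above.

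The claim thus reduces to showing that each inner sum $\sum_{n\ge1}a_{n,r}z^n$, evaluated at the rational point $z=-\ell(k+1)/k$, is a rational number — and this, rather than the bookkeeping above, is where the real work lies. To settle it I would return to the definition \eqref{eqn:a_n_j_def}: I expect that for each fixed $r$ only finitely many $a_{n,r}$ are nonzero, as is typical for the coefficients produced by a Laplace-type or Euler--Maclaurin-type asymptotic expansion (where the $r$-th correction is a finite polynomial expression in finitely many Taylor coefficients), in which case the inner sum is a finite $\Q$-combination of integer powers of a fixed rational and we are done; should it instead be genuinely infinite, the alternative is to identify the generating function $\sum_n a_{n,r}z^n$ as a rational — or at worst algebraic — function over $\Q$, again forcing a rational value at rational $z$. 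Deciding which of these holds, i.e.\ unwinding \eqref{eqn:a_n_j_def} far enough to control the $n$-dependence of $a_{n,r}$, is the main obstacle.
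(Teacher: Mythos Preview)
Your proposal addresses only the ``Moreover'' rationality clause, taking the asymptotic expansion and the closed formula for $\beta_k(j)$ as given. But that expansion \emph{is} the theorem: the paper obtains it by rewriting $g_k$ via \eqref{eqn:g_k_with_theta}, defining the relative error \eqref{eqn:relative_error_def}, reducing $R_k$ to $2\Re(\cI_1)$ (Lemmas~\ref{lem:R_k_summation}--\ref{Lemma4}), expanding $h_q(z)$ in $z$ and $s$ (Lemma~\ref{lem:a_expansion}), and invoking the Wright-function asymptotics (Propositions~\ref{thm:wright_asymptotic}, \ref{Theorem6}, \ref{thm:W_moment_expansion}) to evaluate each $W_j$. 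None of this is touched in your write-up, so as a proof of Theorem~\ref{thm:main} it is incomplete in a fundamental way, not just in detail.

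For the rationality clause itself, your reduction is correct: the factorizations of $k^{a(k+1)/k}$ and the ratio $b_k(j_0+m'k)/b_k(j_0)$ are exactly right, and everything does come down to the rationality of $\sum_{n\ge 1} a_{n,r}\bigl(-\ell(k+1)/k\bigr)^n$. What you call ``the main obstacle'' is, however, not an obstacle at all once you actually unwind \eqref{eqn:a_n_j_def}. Writing $q=e^{-s}$, the right-hand side of \eqref{eqn:a_n_j_def} is
\[
\exp\!\Big(\tfrac{k}{4(k+1)}z^{2}s-\tfrac{k}{2}zs-\sum_{j\ge 1} f_{2j}(z)\,s^{2j}\Big),
\]
and for each power of $s$ the coefficient in the exponent is a \emph{polynomial} in $z$ with rational coefficients (quadratic for $s^1$; degree $2j+1$ for $s^{2j}$, since $f_{2j}$ is built from Bernoulli numbers and the Bernoulli polynomials $B_{2j+1}(1+z)$ and $B_{2j+1}\!\bigl(1-\tfrac{kz}{k+1}\bigr)$). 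Exponentiating, the coefficient of $s^{r}$ in $h_q(z)$ is therefore a polynomial in $z$ over $\Q$; equivalently, for each fixed $r$ only finitely many $a_{n,r}$ are nonzero, and they are rational. Your first alternative holds, the inner sum is a finite rational combination, and the argument closes. The paper leaves this step implicit (it records in the theorem that the $a_{n,r}$ are rational and gives \eqref{eqn:a_n_j_def} as their definition); your bookkeeping makes it explicit, but you stopped one line short of reading off the finiteness that \eqref{eqn:a_n_j_def} hands you.
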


\begin{remark}
Theorem \ref{thm:main} confirms Zagier's numerics in the case $k=3$. 
\end{remark}

Our proof technique 
demonstrates the connection between the series $g_k(q)$ and Wright's generalization of the Bessel function 
$$\phi(\rho, \beta; z) := \sum_{n=0}^\infty \frac{z^n}{n! \Gamma(\beta - \rho n)}$$
with $\rho<1$ and $\beta \in \C$. In particular, { as $s\to 0$}, we establish that the leading term in the relative error $R_k(q)$ (Equation \ref{eqn:relative_error_def}) is proportional to the real part of a Wright function 
\[g_k(q) \thicksim  \frac{2}{k+1} \sqrt{\frac{2\pi }{s}}e^{-\frac{\pi^2}{3 k(k+1)s} + \frac{s}{24}}\Re \( \phi \( \frac{k}{k+1},1;\frac{(k+1)^{\frac {k}{k+1}}}{k}e^{\frac{\pi i k}{k+1}}s^{-\frac{1}{k+1}} \) \). \]
In particular, for $k=3,$ a result of Wright {\bf \cite[equation (3.5) and Section 4]{wrightIV} }, gives, { as $s\to 0$}
\begin{equation*}
\frac{1}{2} \Re \( \phi\(\frac{3}{4}, 1; \frac{4^{\frac34}}{3} e^{\frac{3\pi i}{4}} s^{-\frac{1}{4}} \) \)
\sim  \frac{1}{3} + c_1  
s^{\frac{1}{3}} + 5 c_2 
s^{\frac{2}{3}}  + O(s),
\end{equation*}
where $c_1$ and $c_2$ are as in \eqref{constantsZagier}. These are Zagier's asymptotics up to $O(s).$
We believe that such comparison and application of other $q$ analogues of generalized hypergeometric functions may be useful in other asymptotic problems.

The paper is organized as follows.  Section \ref{sec:notation} contains notation and 
basic results about the $q$-functions used throughout the paper.  Section \ref{sec:relative_error}
defines the relative error between the series $g_k$ and the expected main term. 
Section \ref{sec:Wright} shows that the relative error can be approximated by the Wright function 
and its ``moments''.

\section*{Acknowledgements}  
The research of the first author was supported by the Alfried Krupp Prize for Young University Teachers of the Krupp foundation and the research leading to these results has received funding from the European Research Council under the European Union's Seventh Framework Programme (FP/2007-2013) / ERC Grant agreement n. 335220 - AQSER.  The research of the second author was supported by grant project number 27300314 of the Research Grants Council.

The second author thanks Alexandre Eremenko,  Sergei Sitnik, and the MathOverflow users  for helping to name the Wright function.  Part of this work was done while the second author was a graduate student at Drexel University and visiting the University of Cologne. 

 The authors thank Karl Mahlburg for helpful  comments  on an earlier version of this paper.

\section{Notation and Preliminary Results}\label{sec:notation}
This section contains some preliminary results that we require for the proof of Theorem \ref{thm:main} as well as some 
$q$-series notation. 
Wright \cite{wrightII, wrightIII, wrightIV} established asymptotics
for $\phi(\rho, \beta; z)$ in all domains. Unfortunately, a direct application of these asymptotics produces a degenerate answer (\cite[Theorem 1]{wrightIV} with $Y = -\frac{1}{k(k+1)s}$), { namely}  \[ \phi \( \frac{k}{k+1},1;\frac{(k+1)^{\frac {k}{k+1}}}{k}e^{\frac{\pi ik}{k+1}}s^{-\frac{1}{k+1}} \) \thicksim i \sqrt{k(k+1)s}e^{\frac{1}{k(k+1)s}}\sum_{m=0}^{M-1}A_m (-1)^m (k(k+1)s)^{m},   \] where the coefficients $A_m$ are given in \cite{wrightIV}. Taking real parts shows that \[ \Re \( \phi \(\frac{k}{k+1},1;\frac{(k+1)^{\frac {k}{k+1}}}{k}e^{\frac{\pi ik}{k+1}}s^{-\frac{1}{k+1}} \) \) = O(1).\]  A little more nuance needs to be applied to Wright's work to obtain a meaningful estimate.

\begin{proposition}\label{thm:wright_asymptotic}
If $\frac{1}{2} \le \rho < 1$ with $|\arg (-e^{2\pi i \rho})| < \frac \pi 2 (1+ \rho) $ and $z>0,$
then, for $L\in\N $,
$$\Re \( \phi \(\rho, 1; ze^{\pi i\rho} \) \) = \frac{1}{2\rho} + \frac 1{2 \pi \rho} \sum_{\ell =1}^{L-1}  \frac{(-1)^{\ell+1} }{ \ell !}\Gamma\left(\frac{\ell}{\rho}\right)z^{-\frac{\ell}{\rho}} \sin \left( \frac{\pi \ell (2\rho -1)}{\rho}\right)+O\left(z^{-\frac{ L}{\rho}}\right). $$
\end{proposition}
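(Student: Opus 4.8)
The plan is to extract the real part of the Wright function directly from its Mellin--Barnes integral representation, taking care to capture the residues that survive under the relevant rotation of the argument, rather than applying Wright's saddle-point asymptotics (which, as observed, degenerates here). Recall that for $\rho<1$ and $\beta\in\C$ one has
\[
\phi(\rho,\beta;w) = \frac{1}{2\pi i}\int_{\mathcal{H}} e^{t}\, t^{-\beta}\, \exp\!\(w t^{\rho}\)\,\frac{dt}{t},
\]
a Hankel-type contour integral, or equivalently a Mellin--Barnes integral
\[
\phi(\rho,\beta;w) = \frac{1}{2\pi i}\int_{(c)} \Gamma(\zeta)\,\frac{(-w)^{-\zeta}}{\Gamma(\beta-\rho(-\zeta))}\,d\zeta
\]
obtained by expanding $e^{wt^{\rho}}$ and using the Hankel representation of $1/\Gamma$. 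With $\beta=1$, $w = ze^{\pi i\rho}$ and $z>0$, I would shift the contour to the right, picking up residues at $\zeta = \ell$ for $\ell=1,2,\dots$; the residue at $\zeta=\ell$ contributes a term proportional to $z^{-\ell}e^{-\pi i\rho\ell}/\Gamma(1+\rho\ell)\cdot$(something), and upon substituting $z\mapsto z^{1/\rho}$-type scaling and using the reflection formula $\Gamma(1+\rho\ell)^{-1}$ rewritten via $\Gamma(\ell/\rho)$ and $\sin$, one produces exactly the claimed series in $z^{-\ell/\rho}$. The constant term $\tfrac{1}{2\rho}$ comes from the residue at $\zeta=0$ (from the pole of $\Gamma(\zeta)$), after taking real parts: the pole of $\Gamma$ at $0$ is simple and the accompanying $\Gamma(1-\rho\zeta)^{-1}$-factor contributes the $1/\rho$.

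Concretely, the key steps in order: (i) write $\Re\phi(\rho,1;ze^{\pi i\rho}) = \tfrac12\(\phi(\rho,1;ze^{\pi i\rho}) + \overline{\phi(\rho,1;ze^{\pi i\rho})}\)$ and note $\overline{\phi(\rho,1;ze^{\pi i\rho})} = \phi(\rho,1;ze^{-\pi i\rho})$ since the Taylor coefficients of $\phi$ in its third argument are real for $\beta=1$, $\rho$ real; (ii) represent $\phi(\rho,1;ze^{\pm\pi i\rho})$ by the Hankel integral and rotate the contour, the hypothesis $\tfrac12\le\rho<1$ with $|\arg(-e^{2\pi i\rho})|<\tfrac{\pi}{2}(1+\rho)$ being exactly what guarantees the rotated contour stays in the region where $\exp(wt^\rho)$ decays and the deformation is legitimate; (iii) collect the residue sum, which after the $\sin\!\(\pi\ell(2\rho-1)/\rho\)$ manipulation is precisely $\tfrac{1}{2\rho} + \tfrac{1}{2\pi\rho}\sum_{\ell=1}^{L-1}\frac{(-1)^{\ell+1}}{\ell!}\Gamma\(\tfrac{\ell}{\rho}\)z^{-\ell/\rho}\sin\(\tfrac{\pi\ell(2\rho-1)}{\rho}\)$; (iv) bound the shifted-contour tail integral by $O(z^{-L/\rho})$ using Stirling on the Mellin--Barnes integrand.

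The main obstacle I anticipate is step (ii)--(iv): verifying that the contour rotation is valid precisely under the stated angular condition and that the tail, after shifting past $\zeta = L-\tfrac12$ (say), really is $O(z^{-L/\rho})$ uniformly. This requires a careful analysis of $|\Gamma(\zeta)/\Gamma(1-\rho(-\zeta))|$ along vertical lines combined with the oscillatory factor $(-w)^{-\zeta}$; the exponential growth/decay in $\Im\zeta$ of the two Gamma factors must be shown to beat the $e^{\pi|\Im\zeta|\cdot|\arg(-w)|}$-type growth from $(-w)^{-\zeta}$, and this is exactly where the inequality $|\arg(-e^{2\pi i\rho})| < \tfrac{\pi}{2}(1+\rho)$ is consumed. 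An alternative, possibly cleaner, route is to avoid Mellin--Barnes entirely and instead start from Wright's own expansions in the overlapping sectors (his papers \cite{wrightII,wrightIII,wrightIV} give the full asymptotic picture), combining the expansion valid on $\arg w$ slightly above $\pi\rho$ with the one slightly below and observing that the ``exponentially large'' saddle contributions in the two sectors are complex conjugates that cancel upon taking $\Re$, leaving only the algebraic (residue) series; the degeneracy noted before Proposition \ref{thm:wright_asymptotic} is then explained as the cancellation of these conjugate saddle terms, and the surviving algebraic series is what the Proposition records.
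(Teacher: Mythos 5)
Your main route does not work as written. The Mellin--Barnes integrand for $\phi$ itself, $\Gamma(\zeta)(-w)^{-\zeta}/\Gamma(1+\rho\zeta)$, has poles only at $\zeta=0,-1,-2,\dots$; there are no poles at $\zeta=\ell\ge 1$, so the residue sum you propose to collect after shifting right does not exist, and no ``rescaling'' can turn putative $z^{-\ell}$ terms into the required powers $z^{-\ell/\rho}$. Those powers can only come from poles of a factor such as $\Gamma(-\rho\zeta)$, which appears only after the reflection formula $1/\Gamma(1-\rho m)=\sin(\pi\rho m)\Gamma(\rho m)/\pi$ has been applied, i.e.\ after trading $\phi$ for the auxiliary series $\sum_{m\ge1}\Gamma(\rho m)w^m/m!$. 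Moreover, your integrand decays only like $e^{-\pi(1-\rho)|\Im\zeta|/2}$ on vertical lines, so the representation requires $|\arg(-w)|<\tfrac{\pi}{2}(1-\rho)$, which fails at the point of interest $w=ze^{\pi i\rho}$, where $|\arg(-w)|=\pi(1-\rho)$; the condition $\tfrac{\pi}{2}(1+\rho)$ you intend to ``consume'' is the convergence condition for the integrand $\Gamma(\zeta)\Gamma(-\rho\zeta)(-w)^{-\zeta}$ of the auxiliary function, not for yours (also, the residue of your integrand at $\zeta=0$ is $1$, not the needed $1/\rho$). Your fallback route has the same gap in different clothing: writing $\phi=H+J$ with $H$ the exponentially large piece, one has $\Re\phi(w)=\Re H(w)+\Re J(w)$, and the contributions from $w$ and $\bar w$ add up to $2\Re H(w)$ rather than cancel; knowing only that the asymptotic expansion of $H$ is purely imaginary gives nothing better than $\Re H(w)=O\bigl(e^{cz^{1/(1-\rho)}}z^{-M}\bigr)$, still exponentially large --- which is precisely the degeneracy the paper records just before the Proposition. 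What is needed is an exact, not asymptotic, reason why the exponentially large part of $\Re\phi$ vanishes.

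The paper supplies that reason in one identity and otherwise quotes Wright rather than re-deriving him. Applying the reflection formula termwise together with the double-angle formula gives, exactly, $\Re\,\phi(\rho,1;ze^{\pi i\rho})=\tfrac{1}{2\rho}+\tfrac{1}{2\pi}\Im\,D(ze^{2\pi i\rho})$, where $D(w)=\gamma(\tfrac1\rho-1)+\tfrac1\rho\log(-w)+\sum_{m\ge1}\Gamma(\rho m)w^m/m!$. In this identity the exponentially large part of $\phi$ at $\arg w=\pi\rho$ is exactly $-\tfrac{1}{2\pi i}\sum_{m\ge1}\Gamma(\rho m)z^m/m!$, which is purely imaginary (the series is real for $z>0$) and hence drops out of the real part identically. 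The expansion of $D$ is then taken from Wright, equation (3.5) of \cite{wrightIV} (his $d(w)$, adjusted for the $t=0$ singularity), valid precisely for $|\arg(-w)|<\tfrac{\pi}{2}(1+\rho)$; since $\arg(-ze^{2\pi i\rho})=\pi(2\rho-1)<\tfrac{\pi}{2}(1+\rho)$ for $\rho<1$, taking imaginary parts yields the stated series and the $O(z^{-L/\rho})$ error. If you want a self-contained contour argument, run your Mellin--Barnes analysis on $D$ rather than on $\phi$: the integrand $\Gamma(\zeta)\Gamma(-\rho\zeta)(-w)^{-\zeta}$ has a double pole at $\zeta=0$ (producing the logarithmic and constant terms) and simple poles at $\zeta=m/\rho$ (producing the $z^{-m/\rho}$ terms), and its decay $e^{-\pi(1+\rho)|\Im\zeta|/2}$ makes the stated angular hypothesis exactly the right one.
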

\begin{proof}
We apply the identity \[\frac{1}{\Gamma(z)\Gamma(1-z)} = \frac{1}{\pi} \sin(\pi z)\] and the double angle formula to show that
\begin{align*} \Re \( \phi(\rho, 1; ze^{\pi i\rho}) \) &=\frac{1}{2\rho} + \frac{1}{2\pi } \text{Im}\left( D\left(ze^{2\pi i \rho}\right)\right),
\end{align*} where 
\[  D(w):= \gamma \(\frac{1}{\rho}-1 \) +\frac{1}{\rho} \log (-w)+  \sum_{m=1}^\infty \frac{w^m\Gamma( \rho m)}{m!}.\]
Equation (3.5) of \cite{wrightIV} states that if $|\arg (-w)| < \pi/2(1+ \rho), $ then \[D(w)= \frac{1}{\rho} \sum_{m=1}^{L-1}\frac{(-1)^m}{m!}\Gamma \left(\frac{m}{\rho}\right)(-w)^{-\frac{m}{\rho}} + O\(w^{-\frac{L}{\rho}}\). \]  Note that Wright \cite{wrightIV} used the notation $\sigma =\rho$ and $\beta=1 $. Moreover, our $D(w)$ is $d(w)$ adjusted for the $t=0$ singularity.  This adjustment is discussed Section 4 of the same paper.

\end{proof}
Throughout, we use the following $q$-notation ($z\in \C$): 
\begin{align} 
(z;q)_\infty &:= \prod_{m=0}^\infty (1-zq^m), \nonumber \\
(q;q)_{z} &:= \frac{(q;q)_\infty}{(q^{z+1};q)_\infty}, \label{qsubz}\\
\theta(z,q) &:= \sum_{n\in \mathbb{Z}} (-1)^n z^n q^{n^2}, \nonumber\\
\Gamma_q(z) &:= (q;q)_{z-1}(1-q)^{1-z}.\label{Gammaq}
\end{align}
The Jacobi function has the product expansion (see (100.2) of \cite{rademacher})
\begin{equation}\label{Thetaprod}
  \theta(z, q)=\prod_{n= 1}^\infty\left(1-q^{2n}\right)\left(1-zq^{2n-1}\right) \left(1-z^{-1} q^{2n-1}\right), 
\end{equation}
and satisfies the following inversion formula (with $q:=e^{-s}$ and $z:=e^{2\pi iu})$
(see (38.2) of \cite{rademacher})
\begin{equation}\label{thetatrans} 
 \theta(z, q)=\sqrt{\frac{\pi}{s}}\sum_{n\text{ odd}}e^{-\frac{\pi^2}{4s}(n+2u)^2}.
\end{equation}
Next, we recall two identities due to Euler, which state that [2, equations (2.25) and (2.2.6)]
\begin{align*}
\frac{1}{(z; q)_\infty} &=\sum_{n=0}^\infty \frac{z^n}{(q; q)_n}, \\
(z; q)_\infty &=\sum_{n=0}^\infty \frac{(-1)^n z^n q^{\frac{n(n-1)}{2}}}{(q; q)_n}. 
\end{align*}
Moreover, we require the following asymptotic behavior
\begin{equation}\label{eqn:dedekind_asymptotic}
(q;q)_\infty = \sqrt{2\pi} s^{-\frac12} \exp\left(-\frac{\pi^2}{6s}+\frac{s}{24}\right) \( 1+ O\( e^{-\frac{4\pi^2}{s}}\)\),
\end{equation}
which is easily derived from the transformation formula 
\begin{equation}\label{eqn:eta_transformation}
(q;q)_\infty = \sqrt{\frac{2\pi}{s}} e^{-\frac{\pi^2}{6s} + \frac{s}{24}} \prod_{n=1}^\infty \left(1-e^{-\frac{4\pi^2n}{s}}\right)
\end{equation}
(see (118.5) of \cite{rademacher}).

The following lemma is used in Section \ref{sec:relative_error} to identify terms which can be 
asymptotically ignored in a $q$-hypergeometric expression for $g_k$. 
\begin{lemma}\label{refform}
As $s\to 0$ and $x\to \infty,$ we have \begin{align*}\frac{1}{(q; q)_{x-3}(q; q)_{-x}}&\ll s
  q^{-\frac{x(x-3)}{2}}.
 \end{align*}  \end{lemma}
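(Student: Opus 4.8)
The plan is to estimate each $q$-Pochhammer factor separately under the double limit $s \to 0$, $x \to \infty$, using the transformation-type asymptotics available from \eqref{eqn:eta_transformation} together with the definition \eqref{qsubz}. Recall that $(q;q)_{x-3} = (q;q)_\infty/(q^{x-2};q)_\infty$ and $(q;q)_{-x} = (q;q)_\infty/(q^{1-x};q)_\infty$, so that
\[
\frac{1}{(q;q)_{x-3}(q;q)_{-x}} = \frac{(q^{x-2};q)_\infty\,(q^{1-x};q)_\infty}{(q;q)_\infty^2}.
\]
The first step is to control the numerator. The factor $(q^{x-2};q)_\infty = \prod_{m\ge 0}(1 - q^{x-2+m})$ is, for $x$ large, a product of terms very close to $1$, and it is bounded (say by an absolute constant, or even tending to $1$) since $0<q<1$ forces $|1-q^{x-2+m}| \le 1$. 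The genuinely delicate factor is $(q^{1-x};q)_\infty = \prod_{m\ge 0}(1 - q^{1-x+m})$, because the exponents $1-x, 2-x, \dots$ are \emph{negative} for small $m$, so the early terms $1 - q^{1-x}, 1-q^{2-x}, \dots$ are large (of size roughly $q^{-(x-m)}$).

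The second step is therefore to isolate those large terms. Write $(q^{1-x};q)_\infty = \prod_{m=0}^{x-2}(1-q^{1-x+m}) \cdot (1;q)\text{-tail}$, where the tail $\prod_{m \ge x-1}(1-q^{m-x+1}) = (q^0;q)\cdots$ — more carefully, the product over $m \ge x-2$ starts at exponent $\ge 0$; the $m = x-2$ term is $1 - q^0 = 0$ if $x$ is an integer, so one must track whether $x$ is restricted to integers. Assuming $x$ ranges over values where $1-x+m$ avoids $0$ (or using the Euler identity $(z;q)_\infty = \sum_n (-1)^n z^n q^{n(n-1)/2}/(q;q)_n$ applied at $z = q^{1-x}$ to get an exact expansion), the dominant contribution to $(q^{1-x};q)_\infty$ comes from the product of the negative-exponent factors, each $1 - q^{1-x+m} = -q^{1-x+m}(1 - q^{x-1-m})$, contributing a sign, a power $q^{\sum_{m}(1-x+m)}$, and a bounded correction. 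Summing the exponents $\sum_{m=0}^{x-2}(1-x+m) = -\sum_{j=1}^{x-1} j = -\frac{(x-1)x}{2}$ gives the main power $q^{-x(x-1)/2}$, and combined with the bounded correction factors and the $(q^{x-2};q)_\infty$ factor one expects $(q^{x-2};q)_\infty(q^{1-x};q)_\infty \ll q^{-x(x-1)/2}$, hence after dividing by $(q;q)_\infty^2$, which by \eqref{eqn:dedekind_asymptotic} is $\asymp s^{-1}e^{-\pi^2/(3s)}$ up to exponentially small errors, the claim becomes
\[
\frac{1}{(q;q)_{x-3}(q;q)_{-x}} \ll s\, e^{\frac{\pi^2}{3s}}\, q^{-\frac{x(x-1)}{2}}.
\]
Comparing exponents of $q$: the stated bound has $q^{-x(x-3)/2}$, and $-\frac{x(x-1)}{2} = -\frac{x(x-3)}{2} - x$, so $q^{-x(x-1)/2} = q^{-x(x-3)/2} q^{-x} = q^{-x(x-3)/2} e^{sx}$; since $e^{\pi^2/(3s)}$ and the $e^{sx}$ factor (for $x$ not too large relative to $1/s$, which is the regime of application) must be absorbed, the final step is to check that in the operative range of $(s,x)$ these exponential factors are dominated, reducing everything to the clean bound $\ll s\, q^{-x(x-3)/2}$.

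The main obstacle I anticipate is bookkeeping the boundary factor in $(q^{1-x};q)_\infty$ near exponent zero and making the ``bounded correction'' claim rigorous uniformly in both $s$ and $x$ — in particular justifying that $\prod_{m}(1 - q^{x-1-m})$ over the relevant range stays bounded away from $0$ and $\infty$, which is where one needs $q$ bounded away from $1$, i.e.\ $s$ bounded below, \emph{or} a more careful $s$-dependent estimate since the lemma asserts a simultaneous limit. The cleanest route is probably to avoid the infinite product manipulation entirely: apply Euler's second identity to expand $(q^{1-x};q)_\infty$ as a finite-looking sum $\sum_{n \ge 0} (-1)^n q^{n(1-x)} q^{n(n-1)/2}/(q;q)_n$, optimize the exponent $n(1-x) + n(n-1)/2$ over $n$ (the minimum over integer $n$ occurs near $n \approx x - 1/2$, giving exponent $\approx -x(x-1)/2$ as above), bound the sum by its largest term times a geometric-type factor that contributes the extra power of $s$, and then divide through by $(q;q)_\infty^2$ using \eqref{eqn:dedekind_asymptotic}. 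I would present the proof in that form, since it keeps every estimate explicit and uniform.
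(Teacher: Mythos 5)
Your reduction to bounding $(q^{x-2};q)_\infty(q^{1-x};q)_\infty/(q;q)_\infty^2$ is the same starting point as the paper, but the estimate you then make loses exactly the cancellation that the lemma encodes, and you have in fact flagged the resulting gap yourself: the bound you reach, $\ll s\,e^{\pi^2/(3s)}q^{-x(x-1)/2}$, is strictly weaker than the claim, and the factor $e^{\pi^2/(3s)}$ cannot be ``absorbed'' in any operative range --- it diverges as $s\to0$ independently of $x$, and $s\,q^{-x(x-3)/2}$ contains nothing to dominate it. The loss happens when you bound the ``correction'' factors by $O(1)$: after extracting $-q^{1-x+m}$ from each negative-exponent factor of $(q^{1-x};q)_\infty$, the leftover product is essentially $(q^{\theta};q)_\infty(q^{1-\theta};q)_\infty$ with $\theta$ the fractional part of $x$, and this is \emph{not} of bounded size as $s\to 0$: it is exponentially small, of order $e^{-\pi^2/(3s)}$ (up to powers of $s$ and a factor behaving like $\sin(\pi x)$, which vanishes at integer $x$). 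That exponentially small factor is precisely what must cancel the $e^{\pi^2/(3s)}$ produced by $1/(q;q)_\infty^2$ via \eqref{eqn:dedekind_asymptotic}. Your proposed ``cleanest route'' through Euler's series for $(q^{1-x};q)_\infty$ has the same defect: that series is violently alternating, its largest term is of order $q^{-(x-1/2)^2/2}e^{\pi^2/(6s)}$ while the sum itself is exponentially smaller, so ``largest term times a geometric factor'' again cannot see the cancellation. The weakened bound is also materially insufficient downstream: fed into Lemma \ref{lemma3} (applied there with base $q^{k+1}$) it would introduce an extra factor $e^{\pi^2/(3(k+1)s)}$ into the bound for $\cI_n(s)$, which for larger $k$ overwhelms the decay $e^{-\pi^2(n^2-1)/(2k(k+1)s)}$ that Lemma \ref{Lemma4} relies on.

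The missing idea is to keep the products together and use modularity instead of termwise estimates. By the triple product \eqref{Thetaprod}, $(q^{x-2};q)_\infty(q^{1-x};q)_\infty(q;q)_\infty=\theta\bigl(q^{x-\frac32},q^{\frac12}\bigr)$, so the left-hand side of the lemma equals $\theta\bigl(q^{x-\frac32},q^{\frac12}\bigr)/(q;q)_\infty^3$. The inversion formula \eqref{thetatrans} gives $\theta\bigl(q^{x-\frac32},q^{\frac12}\bigr)=\sqrt{8\pi/s}\,\Re\exp\bigl(\bigl(\pi i+s\bigl(x-\tfrac32\bigr)\bigr)^2/(2s)\bigr)\bigl(1+O\bigl(e^{-4\pi^2/s}\bigr)\bigr)$, whose factor $e^{-\pi^2/(2s)}$ exactly cancels the $e^{\pi^2/(2s)}$ coming from $(q;q)_\infty^{-3}$ via \eqref{eqn:dedekind_asymptotic}; what remains is $-\frac{s}{\pi}\sin(\pi x)\,q^{-\frac12(x-\frac32)^2}$ up to exponentially small errors, which is $O\bigl(s\,q^{-x(x-3)/2}\bigr)$ since $|\sin(\pi x)|\le 1$ and $(x-\tfrac32)^2=x(x-3)+\tfrac94$. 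That is the paper's proof; without this (or an equivalent way of exhibiting the $e^{-c/s}$ smallness of $(q^{\theta};q)_\infty(q^{1-\theta};q)_\infty$), your factor-by-factor or largest-term estimates cannot reach the stated bound.
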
  
\begin{proof} 
By \eqref{Thetaprod}
\[\left(q^{x-2};q\right)_\infty\left(q^{1-x};q\right)_\infty(q;q)_\infty
=\theta\left(q^{x-\frac32},q^\frac{1}{2}\right). \] Dividing by
$(q;q)_\infty^3$ and using (\ref{qsubz}) then results in 
\begin{equation}\label{eqn:lemma_identity}
\frac{1}{(q)_{x-3}(q)_{-x}}
        = \frac{\theta\left(q^{x-\frac32},q^\frac{1}{2}\right)}{(q;q)_\infty^3}.
\end{equation}
By \eqref{eqn:dedekind_asymptotic} 
\[
\frac{1}{(q;q)_\infty^3 } = 
    \frac{s^{\frac32} }{\sqrt{8\pi^3}}\exp\left(\frac{\pi^2}{2s } - \frac{s}{8}\right)\( 1+ O\( e^{-\frac{4\pi^2}{s}}\)\).
\] 
Moreover \eqref{thetatrans} yields
\[
\theta\left(q^{x-\frac32},q^{\frac12}\right) 
 =\sqrt{\frac{8\pi }{s}}\text{Re} \left (\exp\left(\frac{\left(\pi i + s \left(x-\frac{3}{2}\right)\right)^2}{2s}\right)\right)\left(1+O\left(e^{-\frac{4\pi^2}{s}}\right)\right). 
\]
Combining these approximations with \eqref{eqn:lemma_identity} gives 
\begin{align*}\frac{1}{(q;q)_{x-3}(q;q)_{-x}}&=
\frac{s}{\pi}\text{Re}\left (\exp\left(\frac{\left(\pi i + s \left(x-\frac{3}{2}\right)\right)^2+\pi^2}{2s}\right)\right)\left(1+O\left(e^{-\frac{4\pi^2}{s}}\right)\right) \\
&= -\frac{s q^{-\frac{\left(x-\frac{3}{2}\right)^2}{2 }}}{\pi }\sin (\pi x)\left(1+O\left(e^{-\frac{4\pi^2}{s}}\right)\right)=O\left(s q^{-\frac{x(x-3)}{2}}\right).
 \end{align*}
\end{proof}

The following  is derived from   \cite[Theorem 2]{MR1703273} after applying \eqref{eqn:dedekind_asymptotic} 
(see also \cite{zhang}).
\begin{theorem}\label{lem:McIntosh}
For $x\in\R\backslash\{-\N_0\}$, we have
 $N\in \N_0$, and $q = e^{-s}$
\begin{equation*}
\frac{\Gamma(x) }{\Gamma_q(x)} \left(\frac{1-q}{s} \right)^{1-x}q^{\frac{x(x-1)}{2}}
  = q^{\frac{x(x-1)}{4}}\exp \left(-\sum_{j=1}^N \frac{B_{2j} B_{2j+1}(x)}{2j(2j+1)!}s^{2j}  + O_N\left(s^{2N+1}\right)\right), \end{equation*}
where $B_k(x)$ are the Bernoulli polynomials and $B_k$ are the Bernoulli numbers. 
Moreover, 
this asymptotic can be taken to hold on compact subsets of the complex $s$-plane.
\end{theorem}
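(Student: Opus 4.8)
The approach is to reduce the claimed identity to the classical small-$s$ asymptotics of the single $q$-product $(q^x;q)_\infty$ -- which is the content of McIntosh's Theorem~2 \cite{MR1703273} and can be re-derived cleanly by a Mellin--Barnes argument -- and then to cancel the ``universal'' factors against the Dedekind asymptotics \eqref{eqn:dedekind_asymptotic}. First I would rewrite the left-hand side in terms of infinite products: by \eqref{Gammaq} and \eqref{qsubz} with $z=x-1$ one has $\Gamma_q(x)=(q;q)_{x-1}(1-q)^{1-x}=(q;q)_\infty(q^x;q)_\infty^{-1}(1-q)^{1-x}$, so the left-hand side equals
\[
\frac{\Gamma(x)\,(q^x;q)_\infty}{(q;q)_\infty}\,s^{x-1}\,q^{\frac{x(x-1)}{2}},
\]
and it suffices to produce the complete small-$s$ expansion of $\log(q^x;q)_\infty$.

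To obtain that expansion I would use the Mellin--Barnes method: with $q=e^{-s}$, the function $-\log(q^x;q)_\infty=\sum_{n\ge1}\frac{e^{-nxs}}{n(1-e^{-ns})}$ has Mellin transform $\Gamma(w)\,\zeta(w+1)\,\zeta(w,x)$, where $\zeta(\cdot,x)$ is the Hurwitz zeta function, so shifting the inverse-Mellin contour to the left yields the small-$s$ expansion of $\log(q^x;q)_\infty$ as a sum of residues, with a remainder of size $O_N(s^{2N+1})$ coming from the shifted integral. Evaluating: the simple pole of $\zeta(w,x)$ at $w=1$ produces the term $-\tfrac{\pi^2}{6s}$; the double pole at $w=0$, using $\zeta(0,x)=\tfrac12-x$, $\zeta'(0,x)=\log(\Gamma(x)/\sqrt{2\pi})$, and the fact that the Euler constant cancels between the Laurent expansions of $\Gamma(w)$ and $\zeta(w+1)$, produces $(\tfrac12-x)\log s+\tfrac12\log(2\pi)-\log\Gamma(x)$; and the poles of $\Gamma(w)$ at the negative integers, using $\zeta(-m,x)=-\tfrac{B_{m+1}(x)}{m+1}$, $\zeta(0)=-\tfrac12$, $\zeta(1-2k)=-\tfrac{B_{2k}}{2k}$, and $\zeta(1-m)=0$ for odd $m\ge3$ (which is precisely why no odd powers of $s$ past $s^{1}$ occur), produce $\tfrac{B_2(x)}{4}s$ from $w=-1$ and $-\tfrac{B_{2k}B_{2k+1}(x)}{2k(2k+1)!}s^{2k}$ from $w=-2k$. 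This is McIntosh's Theorem~2 (see also \cite{zhang}); in particular, specializing $x=1$ recovers the logarithm of \eqref{eqn:dedekind_asymptotic}, since $B_2(1)=\tfrac16$ and $B_{2j+1}(1)=0$ for $j\ge1$.

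Finally I would assemble the pieces. Using \eqref{eqn:dedekind_asymptotic} in the form $\log(q;q)_\infty=-\tfrac{\pi^2}{6s}-\tfrac12\log s+\tfrac12\log(2\pi)+\tfrac{s}{24}+O(e^{-4\pi^2/s})$ together with the displayed reduction, the logarithm of the left-hand side is
\[
\log\Gamma(x)+\bigl(\log(q^x;q)_\infty-\log(q;q)_\infty\bigr)+(x-1)\log s+\tfrac{x(x-1)}{2}\log q,
\]
and substituting the two expansions makes the $-\tfrac{\pi^2}{6s}$, the $\tfrac12\log(2\pi)$, the $\log\Gamma(x)$, and the $\log s$ contributions all cancel. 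Using $\tfrac{s}{24}=\tfrac{B_2(1)}{4}s$ and $B_2(x)-B_2(1)=x(x-1)$, what survives at order $s$ is $\tfrac{B_2(x)-B_2(1)}{4}s+\tfrac{x(x-1)}{2}\log q=\tfrac{x(x-1)}{4}s-\tfrac{x(x-1)}{2}s=\tfrac{x(x-1)}{4}\log q$, which is exactly the exponent of the prefactor $q^{x(x-1)/4}$ to be displayed; exponentiating, the remaining $-\sum_{j=1}^N\tfrac{B_{2j}B_{2j+1}(x)}{2j(2j+1)!}s^{2j}$ sits inside the exponential and the exponentially small error from \eqref{eqn:dedekind_asymptotic} is absorbed into $O_N(s^{2N+1})$, giving the identity. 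For the final sentence, the Mellin representation holds for $\Re(s)>0$ with the shifted-contour integral bounded uniformly for $s$ in any sector $|\arg s|\le\tfrac\pi2-\delta$ and $x$ in any compact subset of $\C\setminus(-\mathbb{N}_0)$, as does \eqref{eqn:eta_transformation}, so the expansion persists on the corresponding compact subsets of the $s$-plane.

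The one step that genuinely needs care -- although it is entirely packaged inside \cite{MR1703273} -- is the residue computation at the double pole $w=0$: the $\log s$ and constant contributions must come out exactly right, in particular the Euler constant must disappear, for them to cancel cleanly against $\Gamma(x)$ and against the Dedekind asymptotics, and a sloppy evaluation there would leave spurious terms. Everything else is bookkeeping, together with the routine bound on the shifted-contour integral and its uniformity in $s$ and $x$.
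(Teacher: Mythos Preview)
Your proposal is correct and follows exactly the route the paper indicates: the paper does not supply its own argument but simply says the formula is ``derived from \cite[Theorem 2]{MR1703273} after applying \eqref{eqn:dedekind_asymptotic}'', and that is precisely your reduction to $\Gamma(x)(q^x;q)_\infty/(q;q)_\infty\cdot s^{x-1}q^{x(x-1)/2}$ followed by cancelling the Dedekind asymptotics against McIntosh's expansion. Your Mellin--Barnes re-derivation of McIntosh's Theorem~2 is an extra (and correctly executed) bonus that the paper does not attempt, and your bookkeeping of the $\log s$, $\log\Gamma(x)$, and linear-in-$s$ terms checks out line by line.
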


\section{The Relative Error}\label{sec:relative_error}
In this section, we asymptotically approximate $g_k$ and define a relative error term 
which is then compared to the Wright function. 

We start by representing $g_k$ as an infinite sum of theta functions (see equation (3.3) in \cite{AndPNAS})
\begin{equation}\label{eqn:g_k_with_theta}
g_k(q) =
\frac{1}{\left(q^k;q^k\right)_\infty}\sum_{m=0}^{\infty } \frac{(-1)^m
  q^{\frac{km(m+1)}{2}}\left(q^{k+1-km};q^{k+1}\right)_\infty }{\left(q^k;q^k\right)_m
}\theta\left(q^{km},q^{\frac{k(k+1)}{2}}\right). 
\end{equation}


Turning to the asymptotic expansion of $g_k$, 
it follows from Conjecture \ref{conj:main}, with the constant as established in \cite{KR}, and \eqref{eqn:dedekind_asymptotic} that
\[g_k\left( e^{-s}\right) \thicksim
\frac{k+1}{k} \frac{\left(q^{k+1};q^{k+1}\right)_\infty}{\left(q^k;q^k\right)_\infty}\sqrt{\frac{2\pi
  }{k(k+1)s}}e^{-\frac{\pi^2}{2 k(k+1)s}}.\] 
Thus  it is natural to define the
relative error 
\begin{equation}\label{eqn:relative_error_def}
R_k(q)
:=g_k(q)\frac{\left(q^k;q^k\right)_\infty}{\left(q^{k+1};q^{k+1}\right)_\infty}\sqrt{\frac{k(k+1)s
  }{2\pi }}e^{\frac{\pi^2}{2 k(k+1)s}} 
\end{equation}
 and hence
$\lim_{q\rightarrow 1} R_k (q) = (k+1)/k.$ 

The next lemma transforms the theta term in \eqref{eqn:g_k_with_theta} to identify a 
leading term for the relative error $R_k$ in terms of the $q$-series
\begin{equation*}
 \cI_n(s):= \sum_{m=0}^{\infty}\frac{(-1)^m  e^{ \frac{\pi i mn}{k+1}} q^{\frac{km(m+1)}{2}-\frac{km^2}{2(k+1)} } }{(q^k;q^{k})_m (q^{k+1};q^{k+1})_{-\frac{km}{k+1}}}.
\end{equation*}

\begin{remark}
The function $\cI_1$ is closely related to the 
$q$-Wright function defined in \cite{MR2407280}. The main
difference is that $(k+1)/k$ is not an integer in our case.
\end{remark}
  
\begin{lemma}\label{lem:R_k_summation}  
For every $q\in (0,1),$ we have
\[
R_k(q) = \sum_{n\ \mathrm{odd} }
e^{-\frac{\pi^2 \left(n^2-1\right)}{2 k(k+1)s}} \cI_n(s).
\] 
\end{lemma}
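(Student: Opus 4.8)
The plan is to start from the theta representation \eqref{eqn:g_k_with_theta} for $g_k(q)$ and substitute it into the definition \eqref{eqn:relative_error_def} of $R_k$. The only factor in \eqref{eqn:g_k_with_theta} that carries an ``$s$-dependent'' oscillation is the theta function $\theta\left(q^{km},q^{\frac{k(k+1)}{2}}\right)$, and the key idea is to apply the modular inversion formula \eqref{thetatrans} to it. Concretely, writing $q=e^{-s}$, one has $q^{\frac{k(k+1)}{2}}=e^{-\frac{k(k+1)}{2}s}$, so \eqref{thetatrans} (with the role of $s$ there played by $\frac{k(k+1)}{2}s$ and the role of $u$ by $\frac{km}{k+1}\cdot\frac{1}{2}$, i.e.\ $z=q^{km}=e^{2\pi i u}$ forces $2\pi i u = -kms$... — this needs care, see below) should turn the single theta into a Gaussian sum $\sqrt{\tfrac{2\pi}{k(k+1)s}}\sum_{n\text{ odd}}e^{-\frac{\pi^2}{2k(k+1)s}(n+\cdots)^2}$.

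First I would carefully run the substitution $z=q^{km}$, $q\mapsto q^{\frac{k(k+1)}{2}}$ through \eqref{thetatrans}. The cleanest route is to keep $z=e^{2\pi i u}$ \emph{formal} rather than solving for a real $u$: that is, treat $\theta(z,Q)$ with $Q=e^{-S}$, $S=\frac{k(k+1)}{2}s$, and expand $e^{-\frac{\pi^2}{4S}(n+2u)^2}=Q^{-n^2/4}\,z^{-n/2}\,z^{-\ldots}$ — the point is that after setting $u$ so that $e^{2\pi i u}=q^{km}$ the exponent $-\frac{\pi^2}{4S}(n+2u)^2$ becomes $-\frac{\pi^2 n^2}{4S}-\frac{\pi i n\cdot km}{(k+1)}\cdot(\text{something})+\frac{k m^2 s}{\cdots}$, and one should track exactly which pieces produce the factor $e^{\frac{\pi i mn}{k+1}}$, the Gaussian $e^{-\frac{\pi^2 n^2}{2k(k+1)s}}$, and the residual $q$-power $q^{-\frac{km^2}{2(k+1)}}$ appearing in $\cI_n$. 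The prefactor $\sqrt{\pi/S}=\sqrt{\tfrac{2\pi}{k(k+1)s}}$ from \eqref{thetatrans} must cancel against the $\sqrt{\tfrac{k(k+1)s}{2\pi}}$ in \eqref{eqn:relative_error_def}, and the $e^{\frac{\pi^2}{2k(k+1)s}}$ factor in \eqref{eqn:relative_error_def} combines with the $n^2$-Gaussian to leave $e^{-\frac{\pi^2(n^2-1)}{2k(k+1)s}}$. I would also use \eqref{qsubz} to rewrite the remaining finite $q$-Pochhammer $\left(q^{k+1-km};q^{k+1}\right)_\infty/\left(q^k;q^k\right)_\infty$-type factors as $1/(q^{k+1};q^{k+1})_{-\frac{km}{k+1}}$, matching the denominator of $\cI_n(s)$; here one must check the index bookkeeping $q^{k+1-km}=q^{(k+1)(1-\frac{km}{k+1})}$ so that the exponent $-\frac{km}{k+1}$ appears correctly, and also collect the $\left(q^k;q^k\right)_\infty$ in the outer denominator of \eqref{eqn:g_k_with_theta} together with one from $R_k$.

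After these substitutions the double sum over $m\ge 0$ and odd $n$ factors: for each fixed odd $n$, the $m$-sum is exactly $\cI_n(s)$, and summing over odd $n$ with weights $e^{-\frac{\pi^2(n^2-1)}{2k(k+1)s}}$ gives the claimed formula. The last step is a legitimacy check: \eqref{thetatrans} is an exact identity (not merely asymptotic), so the manipulation is valid termwise; one needs only to justify interchanging the $m$-sum with the $n$-sum, which follows from absolute convergence for $q\in(0,1)$ since the $n$-sum is a rapidly convergent Gaussian (each $\cI_n(s)$ converges geometrically because $q^{\frac{km(m+1)}{2}-\frac{km^2}{2(k+1)}}$ has a positive-definite quadratic exponent in $m$), and the collapse of oscillatory terms is exact. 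The main obstacle is purely computational: getting the exponent algebra in \eqref{thetatrans} exactly right — in particular correctly splitting $(n+2u)^2=n^2+4nu+4u^2$ into the three pieces that become the $n$-Gaussian, the $e^{\pi i mn/(k+1)}$ phase, and the $q^{-km^2/(2(k+1))}$ factor — and matching the half-integer shifts in the $\theta\left(q^{km},q^{k(k+1)/2}\right)$ argument $q^{km}$ versus the ``$q^{x-3/2}$''-style shift one sees in Lemma \ref{refform}. Once that bookkeeping is pinned down, the rest is formal.
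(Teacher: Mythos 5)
Your proposal is correct and follows essentially the same route as the paper: rewrite the Pochhammer factors in \eqref{eqn:g_k_with_theta} via \eqref{qsubz}, apply the theta inversion \eqref{thetatrans} with $z=q^{km}$ and base $q^{\frac{k(k+1)}{2}}$ (so $u$ is purely imaginary and $(n+2u)^2$ splits into exactly the Gaussian, phase, and $q^{-\frac{km^2}{2(k+1)}}$ pieces you describe), cancel the prefactors from \eqref{eqn:relative_error_def}, and interchange the $m$- and $n$-sums. The exponent bookkeeping you flag works out as you predict (the sign of the phase is immaterial since the odd $n$ range is symmetric), so no gap remains.
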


\begin{proof}
Rewriting  \eqref{eqn:g_k_with_theta}, we obtain that
\begin{equation*}\label{eqn:lemma3.1_1}\begin{split}
g_k(q)&=\frac{\left(q^{k+1};q^{k+1}\right)_\infty}{\left(q^k;q^k\right)_\infty}\sum_{m=0}^{\infty}\frac{(-1)^m
  q^{\frac{km(m+1)}{2}}
  \theta\left(q^{km},q^{\frac{k(k+1)}{2}}\right)}{\left(q^k;q^k\right)_m
  \left(q^{k+1};q^{k+1}\right)_{-\frac{km}{k+1}}}. 
	\end{split}
\end{equation*}
Lemma \ref{lem:R_k_summation} now follows by applying the transformation law  \eqref{thetatrans}, to obtain that
 \begin{align*}
  \theta\left(q^{km},q^{\frac{k(k+1)}{2}}\right)  &= \sqrt{\frac{2\pi
    }{ k(k+1)s}} q^{- \frac{km^2}{2(k+1)} }
  \sum_{n\ \textrm{odd}} e^{\frac{\pi im n }{k+1}} e^{- \frac{\pi^2 n^2}{2 k(k+1)s
  }}.
\end{align*}
\end{proof}

The next lemma bounds the terms in the summation for $R_k$
in Lemma \ref{lem:R_k_summation}.
\begin{lemma}\label{lemma3}
For all $n\in \mathbb{N}$ and $s>0,$ we have, as $s\rightarrow 0$,
\[\cI_n(s) =
O\left(\frac{1}{s^3}\exp \left(\frac{\pi^2}{6
  k(k+1)s}\right)\right).\] 
\end{lemma}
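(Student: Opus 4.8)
\textbf{Proof proposal for Lemma \ref{lemma3}.}

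The plan is to bound $\cI_n(s)$ by splitting the defining sum into two ranges of the summation index $m$ and estimating each piece separately. Recall
\[
\cI_n(s)= \sum_{m=0}^{\infty}\frac{(-1)^m  e^{\frac{\pi i mn}{k+1}} q^{\frac{km(m+1)}{2}-\frac{km^2}{2(k+1)}}}{(q^k;q^{k})_m \,(q^{k+1};q^{k+1})_{-\frac{km}{k+1}}}.
\]
The quadratic exponent in the numerator is $\frac{km(m+1)}{2}-\frac{km^2}{2(k+1)} = \frac{km^2}{2}\cdot\frac{k}{k+1} + \frac{km}{2}$, which is positive and grows; since $q=e^{-s}<1$, this factor decays like $\exp(-\tfrac{ck m^2 s}{2})$ for large $m$. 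The potential source of growth is the reciprocal $(q^{k+1};q^{k+1})_{-\frac{km}{k+1}}^{-1}$, which by \eqref{qsubz} equals $(q^{k+1+ \lceil \text{stuff}\rceil };q^{k+1})_\infty / (q^{k+1};q^{k+1})_\infty$ type ratios; the key point is that this factor is controlled by Lemma \ref{refform} (applied with $q\mapsto q^{k+1}$ and an appropriate linear function of $m$ in place of $x$), which shows it is at most $O\!\left(s\, q^{-\frac{(k+1)}{2}\cdot(\text{quadratic in }\tfrac{km}{k+1})}\right)$. Matching this negative-exponent growth against the numerator's decay is the heart of the estimate.

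First I would record the elementary bounds: $(q^k;q^k)_m \ge (q^k;q^k)_\infty \gg s^{1/2}\exp(-\tfrac{\pi^2}{6k s})$ by \eqref{eqn:dedekind_asymptotic} (so $1/(q^k;q^k)_m \ll s^{-1/2}\exp(\tfrac{\pi^2}{6ks})$ uniformly in $m$); and $|e^{\pi i mn/(k+1)}|=1$. Next, for the factor $(q^{k+1};q^{k+1})_{-\frac{km}{k+1}}^{-1}$ I would invoke Lemma \ref{refform} with the substitution $q\to q^{k+1}$, $s\to (k+1)s$, and $x\to \frac{km}{k+1}+3$ (so that $x-3 = \frac{km}{k+1}$ and $-x$ is a shift thereof); after bookkeeping the shift by $3$, which only changes the estimate by a bounded power of $q^{k+1}$, one gets a bound of the shape $O\!\big((k+1)s\cdot q^{-\frac{(k+1)}{2}\left(\frac{km}{k+1}\right)\left(\frac{km}{k+1}-3\right)} \cdot (\text{bounded})\big) = O\!\big(s\, \exp(\tfrac{k^2 m^2 s}{2(k+1)})\cdot(\text{bounded in }m)\big)$. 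Combining: the $m$-th term is $O\!\big(s^{1/2}\exp(\tfrac{\pi^2}{6ks})\cdot \exp(-\tfrac{km^2 s}{2}\cdot\tfrac{k}{k+1}-\tfrac{kms}{2})\cdot \exp(\tfrac{k^2m^2 s}{2(k+1)})\big)$. Crucially the two $m^2$-exponents cancel (both equal $\tfrac{k^2 m^2 s}{2(k+1)}$ up to sign), leaving only the linear decay $\exp(-\tfrac{kms}{2})$ times bounded-in-$m$ corrections, so the series over $m$ is a geometric-type sum with ratio $\approx e^{-ks/2}$, hence sums to $O(1/s)$.

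Putting the pieces together, $\cI_n(s) \ll s^{1/2}\exp(\tfrac{\pi^2}{6ks})\cdot \tfrac{1}{s} \cdot (\text{leftover } s\text{-powers from Lemma \ref{refform}})$. Tracking the accumulated powers of $s$ — one from Lemma \ref{refform}, one inverse from the geometric sum, one from the reciprocal Pochhammer bound, plus the $s^{1/2}$ — gives the stated $O(s^{-3})$; a slightly careful accounting may give a better power, but $s^{-3}$ suffices and is what the later sections need. Also note $\tfrac{\pi^2}{6ks} \le \tfrac{\pi^2}{6k(k+1)s}\cdot(k+1)$... here one must be careful: the claimed exponent is $\tfrac{\pi^2}{6k(k+1)s}$, which is \emph{smaller} than $\tfrac{\pi^2}{6ks}$, so the crude bound $1/(q^k;q^k)_\infty$ is too lossy. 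The fix is to not throw away the full Pochhammer: instead combine $(q^k;q^k)_m^{-1}$ with part of the numerator via the identity $q^{km(m+1)/2}/(q^k;q^k)_m = $ (a term of Euler's product $(q^k;q^k)_\infty/(q^k;q^k)_\infty$ rearrangement), or more directly keep $(q^k;q^k)_\infty^{-1}\cdot(q^{k(m+1)};q^k)_\infty$ and only the \emph{tail} contributes, so effectively one of the three Dedekind-eta-type factors in \eqref{eqn:lemma_identity} is absent. This is the main obstacle: one must arrange the algebra so that exactly \emph{two} (not three) copies of $\exp(\tfrac{\pi^2}{6k(k+1)s})$-scale factors survive — equivalently, exploit that $\theta$ contributes one $(q;q)_\infty^{-1}$ worth of growth and the two Pochhammers in the denominator of $\cI_n$ together contribute one more via Lemma \ref{refform}, for a net of $\exp(\tfrac{\pi^2}{2k(k+1)s})\cdot$(correction), matching the claimed $\tfrac{\pi^2}{6k(k+1)s}$ after the external $e^{\pi^2/(2k(k+1)s)}$ in $R_k$ is accounted for. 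I would therefore carry out the bound directly on the representation of $\cI_n$ coming from Lemma \ref{refform} applied inside the sum, rather than term-by-term with crude estimates.
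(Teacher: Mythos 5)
Your core term-by-term estimate does not prove the stated bound, and the patch you sketch at the end does not close the gap. Concretely: bounding $1/(q^k;q^k)_m$ by $1/(q^k;q^k)_\infty \ll s^{-1/2}e^{\frac{\pi^2}{6ks}}$ and treating the factor $(q^{k+1};q^{k+1})_{\frac{km}{k+1}-3}$ produced by Lemma \ref{refform} as merely ``bounded'' yields $\cI_n(s)\ll s^{-c}e^{\frac{\pi^2}{6ks}}$, which is exponentially weaker than the claimed $e^{\frac{\pi^2}{6k(k+1)s}}$. You notice the loss, but your remark that the weaker power of $s$ ``suffices for later sections'' hides the real problem: with exponent $\frac{\pi^2}{6ks}$ the proof of Lemma \ref{Lemma4} fails for large $k$ (already the $n=3$ term $e^{\frac{\pi^2}{6ks}-\frac{4\pi^2}{k(k+1)s}}$ no longer decays once $k+1>24$), so the precise exponent is essential. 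The decay you threw away is exactly the one the paper keeps: Lemma \ref{refform} (applied with $q\mapsto q^{k+1}$, $x=\frac{km}{k+1}$) gives $\frac{1}{(q^{k+1};q^{k+1})_{-\frac{km}{k+1}}}\ll s\,(q^{k+1};q^{k+1})_{\frac{km}{k+1}-3}\,q^{-\frac{k^2m^2}{2(k+1)}+\frac{3km}{2}}$, and the finite Pochhammer there is \emph{not} discarded; after shifting by $3$ (cost $s^{-3}$, your ``bookkeeping'') one writes
\[
\frac{\left(q^{k+1};q^{k+1}\right)_{\frac{km}{k+1}}}{\left(q^k;q^k\right)_m}
=\frac{\left(q^{k+1};q^{k+1}\right)_\infty}{\left(q^k;q^k\right)_\infty}\cdot
\frac{\left(q^{k(m+1)};q^k\right)_\infty}{\left(q^{km+k+1};q^{k+1}\right)_\infty},
\]
observes that the second quotient is at most $1$ by termwise comparison of exponents, and the single eta-quotient $\frac{(q^{k+1};q^{k+1})_\infty}{(q^k;q^k)_\infty}\ll e^{\frac{\pi^2}{6ks}-\frac{\pi^2}{6(k+1)s}}=e^{\frac{\pi^2}{6k(k+1)s}}$ is what produces the stated exponential; your $m^2$-cancellation observation then correctly reduces the $m$-sum to a geometric one.

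Your proposed fix does not implement this. Keeping ``$(q^k;q^k)_\infty^{-1}(q^{k(m+1)};q^k)_\infty$'' by itself gains nothing, since that tail is $\le 1$ and you are back to the lossy $e^{\frac{\pi^2}{6ks}}$; the gain comes only from pairing that tail against the tail $(q^{km+k+1};q^{k+1})_\infty$ hidden in the Pochhammer from Lemma \ref{refform}, so that a full $(q^{k+1};q^{k+1})_\infty$ survives in the numerator. Moreover, the final accounting in your last sentences is not coherent as a proof of this lemma: $\cI_n$ contains no theta function (the theta was already transformed away in Lemma \ref{lem:R_k_summation}), the statement is about $\cI_n$ alone so the external factor $e^{\frac{\pi^2}{2k(k+1)s}}$ in the definition of $R_k$ cannot be ``accounted for'' here, and the target is a single factor of scale $e^{\frac{\pi^2}{6k(k+1)s}}$, not ``two of three'' eta-type factors from \eqref{eqn:lemma_identity}. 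So the decisive cancellation mechanism is missing from the proposal as written.
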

\begin{proof} 
Let us first note that for $x>1,$ $(1-q^x)^{-1} < (1-q)^{-1}$, so that
\begin{equation}\label{splitoff}
\left(q;q\right)_x = \frac{\left(q;q\right)_{x+3}}{\left(1-q^{x+3}\right)\left(1-q^{x+2}\right)\left(1-q^{x+1}\right)}
 = O\left(\frac{\left(q;q\right)_{x+3}}{s^3}\right).
\end{equation}
Applying Lemma \ref{refform} with $x=km/\left(k+1\right)$,  yields, using (\ref{splitoff})
\begin{align*}
\frac{1}{\left(q^k;q^{k}\right)_m \left(q^{k+1};q^{k+1}\right)_{-\frac{km}{k+1}}} 
  &=O\left(s \frac{\left(q^{k+1};q^{k+1}\right)_{\frac{km}{k+1}-3}}{\left(q^k;q^k\right)_m}
      q^{-\frac{k^2m^2}{2\left(k+1\right)}+\frac{3 km}{2}}\right) \\
  &=O\left(\frac{\left(q^{k+1};q^{k+1}\right)_{\frac{km}{k+1}}}{s^2 \left(q^k;q^k\right)_m}q^{-\frac{k^2m^2}{2\left(k+1\right)}}\right) \\
&= O\left(\frac{\left(q^{k+1};q^{k+1}\right)_\infty}{\left(q^k;q^k\right)_\infty} \frac{1}{s^2}\frac{\left(q^kq^{km};q^k\right)_\infty}{
 \left(q^{km+k+1};q^{k+1}\right)_\infty}q^{-\frac{k^2m^2}{2\left(k+1\right)}}\right) \\
&=O\left(\frac{\left(q^{k+1};q^{k+1}\right)_\infty}{\left(q^k;q^k\right)_\infty s^2}q^{-\frac{k^2m^2}{2\left(k+1\right)}} \right). 
\end{align*}
The last equality follows since
\[
\left(1-q^{k(m+j)}\right)<\left(1-q^{(k+1)(m+j)}\right) 
\] 
which in particular implies that
\[
\frac{\left(q^kq^{km};q^k\right)_\infty}{\left(q^{km+k+1};q^{k+1}\right)_\infty}<1.
\]
Combining the above gives
\[\cI_n(s) 
= O\left(\frac{(q^{k+1};q^{k+1})_\infty}{(q^k;q^k)_\infty s^2}
    \sum_{m=0}^\infty q^{\frac{k m^2}{2}}q^{-\frac{km^2}{4(k+1)}}  \right).\] 
By bounding the sum against a geometric sum and using \eqref{eqn:eta_transformation}, the claim follows.
\end{proof} 

The next lemma determines the main terms in the summation for $R_k$ in Lemma \ref{lem:R_k_summation} explicitly. 
\begin{lemma}\label{Lemma4}  
For $s>0$ and $N\in\N$, we have 
\[R_k(q)=\cI_1(s) + \cI_{-1}(s) +O\left(s^N\right) 
= 2\mathrm{Re}\left( \cI_1(s)\right) + O\left(s^N\right).\] 
\end{lemma}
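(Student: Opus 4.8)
The plan is to start from the exact identity in Lemma \ref{lem:R_k_summation}, namely
\[
R_k(q) = \sum_{n\ \mathrm{odd}} e^{-\frac{\pi^2(n^2-1)}{2k(k+1)s}}\,\cI_n(s),
\]
and split off the two terms with $n=\pm 1$, for which the exponential prefactor $e^{-\pi^2(n^2-1)/(2k(k+1)s)}$ equals $1$, leaving
\[
R_k(q) = \cI_1(s) + \cI_{-1}(s) + \sum_{\substack{n\ \mathrm{odd}\\ |n|\ge 3}} e^{-\frac{\pi^2(n^2-1)}{2k(k+1)s}}\,\cI_n(s).
\]
The goal is to show the tail sum is $O(s^N)$ for every $N$. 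The second equality, $\cI_1(s)+\cI_{-1}(s) = 2\,\mathrm{Re}(\cI_1(s))$, is immediate: from the definition of $\cI_n(s)$, replacing $n$ by $-n$ conjugates the factor $e^{\pi i m n/(k+1)}$ while leaving everything else (which is real, since $q\in(0,1)$) unchanged, so $\cI_{-1}(s) = \overline{\cI_1(s)}$.

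For the tail estimate I would invoke Lemma \ref{lemma3}, which gives the uniform bound
\[
\cI_n(s) = O\!\left(\frac{1}{s^3}\exp\!\left(\frac{\pi^2}{6k(k+1)s}\right)\right)
\]
with an implied constant independent of $n$. Therefore each tail term is
\[
O\!\left(\frac{1}{s^3}\exp\!\left(\frac{\pi^2}{6k(k+1)s} - \frac{\pi^2(n^2-1)}{2k(k+1)s}\right)\right).
\]
For $|n|\ge 3$ we have $n^2-1\ge 8$, so the exponent is at most $\frac{\pi^2}{6k(k+1)s} - \frac{4\pi^2}{k(k+1)s} = -\frac{23\pi^2}{6k(k+1)s}$, which is a fixed negative constant times $1/s$. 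Summing over odd $|n|\ge 3$: since $n^2-1$ grows quadratically, the series $\sum_{|n|\ge 3} \exp(-c(n^2-1)/s)$ converges and is dominated by its first term, so the whole tail is $O\!\left(s^{-3}\exp(-c'/s)\right)$ for some $c'>0$. As $s\to 0$ this is smaller than $s^N$ for every $N$, which gives the claimed error term $O(s^N)$.

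The main (and essentially only) subtlety is making sure Lemma \ref{lemma3} really is uniform in $n$, i.e.\ that the $O$-constant there does not secretly depend on $n$. Inspecting its proof, the bound on $\cI_n(s)$ comes from bounding $|e^{\pi i mn/(k+1)}| = 1$ and then controlling $\sum_m q^{km^2/2 - km^2/(4(k+1))}$ by a geometric series; none of these steps involves $n$, so the uniformity holds. A minor bookkeeping point is that one should sum the geometric-type series $\sum_{n\ \mathrm{odd},\,|n|\ge 3}$ carefully, but because the summand decays like $\exp(-c n^2/s)$ this is routine and the sum is controlled by a constant (indeed by its largest term, at $|n|=3$) for all small $s$. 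With these observations in place the two displayed equalities of Lemma \ref{Lemma4} follow.
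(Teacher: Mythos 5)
Your proposal is correct and follows essentially the same route as the paper: split off the $n=\pm1$ terms of the exact sum from Lemma \ref{lem:R_k_summation}, use the $n$-uniform bound of Lemma \ref{lemma3}, and observe that the remaining exponential factors make the tail exponentially small (the paper controls the tail sum by an integral comparison, you by its largest term --- the same estimate in substance). Your explicit check that the constant in Lemma \ref{lemma3} is independent of $n$, and the remark that $\cI_{-1}(s)=\overline{\cI_1(s)}$ since $q\in(0,1)$, are exactly the points needed and are handled correctly.
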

\begin{proof}
We have, using Lemma \ref{lem:R_k_summation}, Lemma \ref{lemma3}, 
and the integral comparison test,
\begin{align*}
\abs{R_k(q)-\cI_1(s)-\cI_{-1}(s)}\leq  & \ 
2\frac1{s^3} e^{\frac{\pi^2}{6 k(k+1)s}} \sum\limits_{n\text{ odd}\atop{n\geq 3}}e^{-\frac{\pi^2\left(n^2-1\right)}{2 k(k+1)s}}
 \\
\ll & \ \frac{1}{s^3}e^{\frac{2\pi^2}{3 k (k+1)s}}\int_2^\infty e^{-\frac{\pi^2 x^2}{2 k(k+1)s}}dx
\ll s^N.
\end{align*}
\end{proof}
%

\section{Relative Error in terms of the Wright Function}\label{sec:Wright}

In this section, we continue the study of $\cI_1(s)$, relating it, and thus
the relative error $R_k$, to the Wright function.
 By definition 
 \[\cI_1(s) = \sum_{m=0}^\infty \frac{ e^{\frac{\pi i
      mk}{k+1}} q^{\frac{k^2 m^2}{2(k+1)}
}}{\Gamma_{q^k}(m+1)\Gamma_{q^{k+1}}\left(1-\frac{km}{k+1}\right)} \left(\frac{\left(1-q^{k+1}\right)^{\frac{k}{k+1}}q^{\frac{k}{2}}}{1-q^k}
\right)^m.\] Define $w$ by 
\begin{equation*}
 \frac{(1-q^{k+1})^{\frac{k}{k+1}}q^{\frac{k}{2}}}{1-q^k} \sim  \frac{(k+1)^{\frac{k}{k+1}}}{ks^{\frac{1}{k+1}}} =: w \ \ \text{ as } q\to 1 
\end{equation*}
and write 
\[\cI_1(s) = \sum_{m=0}^\infty \frac{w^m e^{\frac{\pi im k}{k+1}}h_q(m)}{\Gamma(m+1) \Gamma\left(1- \frac{km}{k+1}\right)}, \] 
 where, for $z\in\C$,
\begin{equation*}
h_q(z):= q^{\frac{ k^2 z^2}{2(k+1)}+\frac{kz}{2}}\frac{\Gamma(z+1)\Gamma\left(1-\frac{kz}{k+1}\right)}{\Gamma_{q^k}(z+1)\Gamma_{q^{k+1}}\left(1- \frac{kz}{k+1}\right)} \left(\frac{1-q^{k+1}}{(k+1)s }\right)^{\frac{kz}{k+1}}\left(\frac{ks}{1-q^{k} }\right)^{z}.
\end{equation*} 
\
For every $s>0,$ 
$\Gamma_q(z)$ is, as a function of $z$, a nonzero meromorphic function with
simple poles only if $q^{z+m}=1$ for some $m\in\N_0$. 
Therefore, $\Gamma(z)/\Gamma_q(z)$ can be continued to an entire
function in $z$ and thus {the same is true  for} $h_q(z).$  
Hence, it is possible to define $z$-Taylor 
coefficients  for $h_q(z)$ which converge absolutely and uniformly on
compact subsets of the complex plane.
Namely,
\begin{equation}\label{eqn:hq_expansion}
h_q(z) = a_0(s) + a_1(s)z + a_2(s)z^2 + a_3(s) z^3 + \dots.
\end{equation}
We must then expand each $a_n(s)$ in terms of powers of $s$ and show that while $a_0(s) =1,$  $a_n(s) = O(s)$ as $s\to 0.$
\begin{lemma}\label{lem:a_expansion}
For $N\in\N_0,$ there exists coefficients, such that 
$$a_n(s)=\begin{cases}  1 	&  \text{ if }	n=0, \\
                   a_{n,1}s + a_{n,2} s^2 + \dots + a_{n,N} s^N + O_N\left( s^{N+1}\right) & \text{ if } n>0.
\end{cases}
$$
\end{lemma}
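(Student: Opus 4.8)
The plan is to analyze $h_q(z)$ through its defining building blocks, each of which is handled by Theorem~\ref{lem:McIntosh} (the McIntosh-type asymptotic for $\Gamma(x)/\Gamma_q(x)$). First I would write $h_q(z)$ as a product of two such ratios together with elementary powers of $s$, $q$, and $(1-q^{\pm})$. Concretely, grouping the $\Gamma_{q^k}(z+1)$ piece with the factors $q^{kz/2}$, $(ks/(1-q^k))^z$ and an appropriate power of $q$, and grouping the $\Gamma_{q^{k+1}}(1-\tfrac{kz}{k+1})$ piece with $q^{k^2z^2/2(k+1)}$, $((1-q^{k+1})/((k+1)s))^{kz/(k+1)}$ and the remaining power of $q$, one recognizes each group as exactly the left-hand side of Theorem~\ref{lem:McIntosh} (with $q$ replaced by $q^k$ and $x = z+1$ in the first, and $q$ replaced by $q^{k+1}$ and $x = 1-\tfrac{kz}{k+1}$ in the second), up to a leftover power $q^{c z^2}$ that will combine across the two groups. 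The key point is that all the $q$-dependent "error" in $h_q$ is thereby captured by the two exponentials $\exp(-\sum_{j=1}^N \tfrac{B_{2j}B_{2j+1}(x)}{2j(2j+1)!}s^{2j} + O(s^{2N+1}))$ coming from Theorem~\ref{lem:McIntosh}, evaluated at the two relevant arguments, plus the leftover power of $q$, which I expect to cancel identically once the $x(x-1)/4$-type terms from the two applications are added (this cancellation is forced by the fact that $h_q(z)\to 1$ as $q\to1$, which pins down the total exponent at leading order).

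Next, with $h_q(z) = \exp\bigl(P(z,s)\bigr)$ where $P(z,s) = -\sum_{j\ge 1} c_j(z) s^{2j} + O(s^{2N+1})$ is a power series in $s$ with coefficients polynomial in $z$ (the $c_j$ being explicit combinations of $B_{2j}B_{2j+1}(z+1)$ and $B_{2j}B_{2j+1}(1-\tfrac{kz}{k+1})$, rescaled by the appropriate powers of $k$ and $k+1$), I would expand the exponential. Since $P(z,s)$ has no $s^0$ term, $h_q(z) = 1 + O(s^2)$; setting $z=0$ gives $a_0(s) = h_q(0)$, and one checks directly from the definition that $h_q(0) = 1$ (all $\Gamma$ and $\Gamma_q$ factors are $1$, all powers have exponent $0$), so $a_0(s) \equiv 1$, even before invoking the expansion. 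For $n\ge 1$, the coefficient $a_n(s)$ is $\tfrac{1}{n!}\partial_z^n h_q(z)\big|_{z=0}$; since $h_q(z)-1 = O(s^2)$ uniformly for $z$ in a compact neighborhood of $0$ (this uniformity is exactly the ``compact subsets of the complex $s$-plane'' clause in Theorem~\ref{lem:McIntosh}, combined with Cauchy's estimate for the Taylor coefficients in $z$), we get $a_n(s) = O(s^2)$ for $n\ge 1$ — in fact better than the claimed $O(s)$. Truncating $P(z,s)$ at order $s^N$ and expanding $\exp$ produces a genuine polynomial in $s$ of degree $\le N$ with coefficients that are polynomials in $z$; reading off the degree-$n$ coefficient in $z$ yields rational numbers $a_{n,1},\dots,a_{n,N}$ (the claim that they are rational follows since Bernoulli numbers and the values of Bernoulli polynomials at rational points are rational, and $k,k+1$ are integers), with remainder $O_N(s^{N+1})$. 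This is the asserted form.

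The main obstacle I anticipate is bookkeeping rather than conceptual: correctly matching the elementary prefactors in $h_q(z)$ to the precise shape of the left-hand side of Theorem~\ref{lem:McIntosh} — in particular getting the substitution $q\mapsto q^k$ versus $q\mapsto q^{k+1}$, the exponents of $(1-q^k)$ and $(1-q^{k+1})$, and the compensating powers of $q^{z^2}$ to line up so that the leftover quadratic-in-$z$ exponent cancels between the two factors. A secondary subtlety is justifying the uniformity in $z$: one must ensure that for $z$ in a fixed complex neighborhood of $0$ the argument $1-\tfrac{kz}{k+1}$ stays away from $-\N_0$ (so Theorem~\ref{lem:McIntosh} applies) and that the error terms there are uniform in $z$ on that neighborhood, which then feeds through Cauchy's integral formula to give the uniform $O_N(s^{N+1})$ bound on each $a_n(s)$. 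Once these are in place the statement follows; I would also record that, although the lemma only claims $a_n(s) = O(s)$, the argument in fact gives $a_n(s) = O(s^2)$, which is what is needed downstream but costs nothing extra here.
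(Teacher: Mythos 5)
Your overall strategy --- two applications of Theorem \ref{lem:McIntosh} (with $q\mapsto q^k$, $x=z+1$, and $q\mapsto q^{k+1}$, $x=1-\tfrac{kz}{k+1}$), followed by extraction of the $z$-Taylor coefficients via Cauchy's formula with uniformity on compacta --- is exactly the paper's route. But there is a concrete error in the middle step: the leftover power of $q$ does \emph{not} cancel identically. Carrying out the bookkeeping, the two McIntosh applications leave the prefactor $q^{-\frac{kz^2}{4(k+1)}+\frac{kz}{2}}$ (this is precisely the factor displayed in \eqref{eqn:a_n_j_def}); in particular its exponent contains the linear term $\tfrac{kz}{2}$, which no quadratic-in-$z$ leftover could cancel in any grouping. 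Your justification that cancellation is ``forced by $h_q(z)\to1$ as $q\to1$'' is fallacious, since $q^{A(z)}\to 1$ as $q\to 1$ for \emph{any} fixed exponent $A(z)$, so the limit pins down nothing. Consequently your conclusions that $h_q(z)=\exp(P(z,s))$ with $P$ containing only even powers $s^{2j}$, and hence that $a_n(s)=O(s^2)$ for $n\ge1$, are false: expanding the surviving prefactor gives $h_q(z)=1-\tfrac{k}{2}sz+O\left(sz^2\right)+O\left(s^2\right)$, so for instance $a_1(s)=-\tfrac{k}{2}s+O\left(s^2\right)$ has a genuinely nonzero linear coefficient. These coefficients $a_{n,1}$ are not cosmetic: they enter the constants $\beta_k(j)$ of Theorem \ref{thm:main} through \eqref{eqn:a_n_j_def}, so an argument that forces them to vanish would propagate wrong values downstream, even though the bare statement of the lemma (no constant term for $n>0$) would formally still follow.

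The repair is exactly the step you skipped: keep the surviving factor $q^{-\frac{kz^2}{4(k+1)}+\frac{kz}{2}}$, observe that it, like the exponential factor from Theorem \ref{lem:McIntosh}, equals $1+O(s)$ uniformly for $z$ in compact sets, and then your Cauchy-integral extraction of the $a_n(s)$ gives the lemma as stated, with $a_0(s)=h_q(0)=1$ and $a_n(s)=O(s)$ --- and in general no better --- for $n\ge 1$. With that correction your argument coincides with the paper's proof.
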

\begin{proof}
First, observe that $h_q(0)=a_0(s)=1$.
Moreover, by definition, we have for $n\in\N_0$
$$ a_n(s) = \int_{0}^{1} h_q\left(e^{2\pi i x}\right) e^{-2\pi i nx} dx.
$$
Theorem \ref{lem:McIntosh} gives that
\begin{align}
\label{eqn:a_n_j_def}  
h_q(z)= \sum_{n=0}^\infty \sum_{j=0}^\infty a_{n,j}s^{j} z^n 
= q^{-\frac{k z^2}{4(k+1)}+  \frac{kz}{2}}\exp \left(- 
\sum_{j=1}^N f_{2j} (z) s^{2j} + O \left( s^{2N+2} \right)
\right),
\end{align} 
where
\[f_{2j} (z) := \frac{B_{2j}\left( B_{2j+1}(1+z) k^{2j} +B_{2j+1}\left(1-\frac{kz}{k+1}\right) (k+1)^{2j} \right)}{2j(2j+1)!}.\]

{To finish the proof, it} remains to be
shown that each $a_n(s)$ has no constant term in its expansion in $s$.
For this, note that the above implies that \[h_q(z) =
1-\frac{k}{2}sz - s \frac{ k^2}{4(k+1)}z^2 + O\left(s^2\right),\]
yielding the claim.
\end{proof}

 Using Lemma
\ref{Lemma4} the relative error becomes
\[R_k(q) = \sum_{j=0}^\infty a_j(s) 2\text{Re}
\left(\phi_j\(\frac{k}{k+1}, 1 ; e^{-\frac{\pi i k}{k+1} } w\)\right)+
O\left(s^N\right), \] where
\begin{equation*}
\phi_j(\rho, \beta; z):=\sum_{m=0}^\infty\frac{m^j z^m}{\Gamma(m+1) \Gamma(\beta - \rho m)}. 
\end{equation*}
Note that $\phi_0(\rho, \beta;z) = \phi(\rho, \beta; z)$ is the usual 
Wright function given in the introduction. 
Define
\begin{align*}
W_j (w) := 2\text{Re}\left( \phi_j\( \frac{k}{k+1}, 1; e^{-\frac{\pi i k}{k+1} } w\)\right).
\end{align*}
In this notation, \eqref{eqn:hq_expansion} and Lemma \ref{lem:a_expansion} yield
\begin{equation}
\label{eqn:Rk_simplified}
R_k(q) = W_0(w) + \sum_{j=1}^\infty a_j(s) W_j(w) + O\left(s^N\right).
\end{equation} 
Since $\frac 12 \leq  k/(k+1)\leq 1 $ and $w\to \infty$ as $s\to 0$ we
are interested in the behavior of the Wright function for $\frac 12 \leq \rho
<1$ as $w\to \infty$. Proposition \ref{thm:wright_asymptotic} applies directly with $\rho = \frac{k}{k+1}$ and yields the following.

\begin{proposition}\label{Theorem6}  For $z>0$ and $L\in\N$, 
\[W_0(z) = \frac{k+1}{k} + \sum_{\ell=1}^{ L}b_k(\ell )z^{- \frac{\ell(k+1)}{k}} + O\left(z^{-\frac{L(k+1)}{k}}\right),\]
where $b_k( \ell)$ is defined in (\ref{bk}). 
\end{proposition}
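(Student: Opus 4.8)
The plan is to specialize Proposition \ref{thm:wright_asymptotic} to $\rho = k/(k+1)$. First I would check the hypotheses: since $k > 1$ we have $1/2 \le k/(k+1) < 1$, and I need $|\arg(-e^{2\pi i \rho})| < \frac{\pi}{2}(1+\rho)$. With $\rho = k/(k+1)$ one computes $-e^{2\pi i \rho} = -e^{2\pi i k/(k+1)} = e^{i\pi(1 + 2k/(k+1))} = e^{-i\pi/(k+1)}$ (reducing the argument mod $2\pi$), so $|\arg(-e^{2\pi i\rho})| = \pi/(k+1)$, while $\frac{\pi}{2}(1+\rho) = \frac{\pi}{2}\cdot\frac{2k+1}{k+1} = \frac{(2k+1)\pi}{2(k+1)}$. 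Since $\frac{1}{k+1} < \frac{2k+1}{2(k+1)}$ for all $k \ge 1$, the hypothesis holds. Also $W_0(z) = \mathrm{Re}\,\phi_0(k/(k+1),1; e^{-\pi i k/(k+1)} z)$ after factoring the $2$, and I should reconcile the sign of the exponential: $e^{-\pi i k/(k+1)} = e^{\pi i (1 - k/(k+1))} \cdot e^{-\pi i} $, but more directly $e^{-\pi i k/(k+1)}$ and $e^{\pi i k/(k+1)}$ are complex conjugates, and since taking the real part of $\phi$ (whose coefficients are real) is insensitive to conjugating the argument, $W_0(z) = 2\,\mathrm{Re}\,\phi(\rho,1; z e^{\pi i \rho})$ with $\rho = k/(k+1)$, exactly the form in Proposition \ref{thm:wright_asymptotic}.

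Then I would substitute directly. Proposition \ref{thm:wright_asymptotic} gives
\[
W_0(z) = \frac{1}{\rho} + \frac{1}{\pi\rho}\sum_{\ell=1}^{L-1} \frac{(-1)^{\ell+1}}{\ell!}\Gamma\!\left(\frac{\ell}{\rho}\right) z^{-\ell/\rho}\sin\!\left(\frac{\pi\ell(2\rho-1)}{\rho}\right) + O\!\left(z^{-L/\rho}\right).
\]
With $\rho = k/(k+1)$: $1/\rho = (k+1)/k$, $\ell/\rho = \ell(k+1)/k$, $(2\rho-1)/\rho = 2 - 1/\rho = (k-1)/k$, and the prefactor $\frac{1}{\pi\rho} = \frac{k+1}{k\pi}$. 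Thus the $\ell$-th coefficient becomes
\[
\frac{k+1}{k\pi}\cdot\frac{(-1)^{\ell+1}}{\ell!}\,\Gamma\!\left(\frac{\ell(k+1)}{k}\right)\sin\!\left(\frac{\pi\ell(k-1)}{k}\right),
\]
which is exactly $b_k(\ell)$ as defined in \eqref{bk}, and the power of $z$ is $z^{-\ell(k+1)/k}$. Re-indexing the sum from $\ell = 1$ to $L$ (absorbing one more term into the error, which only changes the error exponent from $z^{-(L+1)(k+1)/k}$ to $z^{-L(k+1)/k}$, consistent with the stated bound) completes the identification.

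The only mild subtlety — not really an obstacle — is the bookkeeping around the upper limit $L$ versus $L-1$ and making sure the error term $O(z^{-L(k+1)/k})$ in the statement is weaker than or equal to what Proposition \ref{thm:wright_asymptotic} produces; since $z \to \infty$, replacing $L$ by $L+1$ in Proposition \ref{thm:wright_asymptotic} and then discarding the $\ell = L+1$ term back into the $O$-term handles this cleanly. A final sanity check in the case $k = 3$ against the constants $c_1, c_2$ in \eqref{constantsZagier} confirms the normalization: $b_3(1)$ should reproduce $c_1$ and $b_3(2)$ should reproduce $5c_2$, matching the displayed Wright-function asymptotic in the introduction.
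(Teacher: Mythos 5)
Your proposal is correct and takes essentially the same route as the paper, which proves Proposition \ref{Theorem6} simply by specializing Proposition \ref{thm:wright_asymptotic} to $\rho=\frac{k}{k+1}$ (using that conjugating the argument does not change the real part, and matching coefficients with \eqref{bk}); your hypothesis check and index bookkeeping just make explicit what the paper leaves implicit. One small slip: $\left|\arg\left(-e^{2\pi i k/(k+1)}\right)\right|=\frac{\pi(k-1)}{k+1}$, not $\frac{\pi}{k+1}$, but since $\frac{\pi(k-1)}{k+1}<\frac{\pi(2k+1)}{2(k+1)}$ the hypothesis of Proposition \ref{thm:wright_asymptotic} is still satisfied and your argument goes through unchanged.
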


The following theorem is a slight generalization of Proposition \ref{Theorem6}.
\begin{proposition}\label{thm:W_moment_expansion}
  For every $z>0$ and $j, L\in\N$  \[W_j(z) = \sum_{\ell=1}^{L-1} \left(- \ell \frac{k+1}{k}\right)^j b_k(\ell ) z^{- \frac{\ell(k+1)}{k}} + O\left(z^{- \frac{L(k+1)}{k}}\right). \]
\end{proposition}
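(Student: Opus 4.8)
The plan is to derive the moment expansion for $W_j$ from that of $W_0$ (Proposition \ref{Theorem6}) by exploiting the fact that applying the Euler operator $z\frac{d}{dz}$ to a power series brings down exactly a factor of $m$ from each term. Concretely, starting from
\[
\phi_0\!\left(\tfrac{k}{k+1},1;e^{-\frac{\pi ik}{k+1}}w\right)=\sum_{m=0}^\infty \frac{\left(e^{-\frac{\pi ik}{k+1}}w\right)^m}{\Gamma(m+1)\Gamma\!\left(1-\frac{km}{k+1}\right)},
\]
one observes that $\left(w\frac{d}{dw}\right)^j$ applied to this series reproduces $\phi_j\!\left(\frac{k}{k+1},1;e^{-\frac{\pi ik}{k+1}}w\right)$, since $w\frac{d}{dw}$ acting on $w^m$ returns $m\,w^m$ (the complex phase $e^{-\pi ik/(k+1)}$ is a constant and passes through the differentiation untouched). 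Taking twice the real part and using that $z\frac{d}{dz}$ commutes with $\Re$ on the relevant domain, we get $W_j(z)=\left(z\frac{d}{dz}\right)^j W_0(z)$ for $z>0$.

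Next I would apply this operator to the asymptotic expansion from Proposition \ref{Theorem6}. The constant term $\frac{k+1}{k}$ is annihilated by $z\frac{d}{dz}$, and each power term behaves as
\[
\left(z\frac{d}{dz}\right)\! z^{-\frac{\ell(k+1)}{k}} = -\frac{\ell(k+1)}{k}\, z^{-\frac{\ell(k+1)}{k}},
\]
so iterating $j$ times multiplies the $\ell$-th coefficient $b_k(\ell)$ by $\left(-\frac{\ell(k+1)}{k}\right)^j$, exactly matching the claimed form. The only subtlety is the error term: I need to know that $\left(z\frac{d}{dz}\right)^j O\!\left(z^{-\frac{L(k+1)}{k}}\right)$ is again $O\!\left(z^{-\frac{L(k+1)}{k}}\right)$. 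This follows because the asymptotic expansion of $\phi_0$ (equivalently of $D(w)$ in the proof of Proposition \ref{thm:wright_asymptotic}) is obtained from Wright's work in a form that differentiates term by term — Wright's asymptotics for $\phi(\rho,\beta;z)$ hold uniformly in sectors and the remainder, being itself (up to elementary factors) a Wright-type function, admits the same kind of expansion with the exponent shifted, so differentiation does not worsen the order. Alternatively, one can simply apply Proposition \ref{thm:wright_asymptotic} with $L$ replaced by $L+j$ and note that the operator $z\frac{d}{dz}$ turns the first $j$ extra terms into admissible contributions while controlling the tail.

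The main obstacle is making the remainder estimate rigorous: the statement $\frac{d}{dz}O(z^{-a})=O(z^{-a-1})$ is false for general functions, so one cannot differentiate an asymptotic relation for free. The clean way around this is to not differentiate the asymptotic statement but rather to rerun the proof of Proposition \ref{thm:wright_asymptotic}: write $\phi_j$ in terms of $\left(w\frac{d}{dw}\right)^j$ of Wright's function $D(w)$, use that Wright's \cite{wrightIV} expansion (3.5) is an \emph{asymptotic series with an explicit integral remainder} that can be differentiated under the integral sign, and extract the resulting expansion. This reduces everything to bookkeeping with the factors $\left(-\frac{m}{\rho}\right)^j$ coming out of differentiating $(-w)^{-m/\rho}$, where $\rho=\frac{k}{k+1}$, giving precisely the constants $\left(-\frac{\ell(k+1)}{k}\right)^j$ in the proposition. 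I would therefore state the proof as: ``$W_j=\left(z\frac{d}{dz}\right)^j W_0$ by term-by-term differentiation of the absolutely convergent defining series, and the asymptotic expansion then follows by applying $\left(z\frac{d}{dz}\right)^j$ to the expansion of Proposition \ref{Theorem6}, the differentiated remainder being controlled via the differentiable-remainder form of Wright's estimate used in Proposition \ref{thm:wright_asymptotic}.''
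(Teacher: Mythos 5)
Your proposal is correct and follows essentially the same route as the paper: the authors also use the identity $W_j(e^x)=\frac{d^j}{dx^j}W_0(e^x)$ (your Euler operator $z\frac{d}{dz}$ in the variable $x=\log z$) and then differentiate the expansion of Proposition \ref{Theorem6}, justifying that the remainder keeps the same big-oh bound under differentiation. Your explicit discussion of why the remainder may be differentiated (via Wright's integral-remainder form) is, if anything, more careful than the paper's one-line justification.
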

\begin{proof} Proposition \ref{Theorem6} gives the asymptotic expansion for  $W_0(z)$. 
Moreover, \[W_j(e^{x}) = \frac{d^j}{dx^j}W_0(e^{x}). \]  Thus,
 for $x\to\infty$, \[W_0(e^x) - \frac{k+1}{k\pi }\sum_{\ell=1}^{
    L}\frac{(-1)^{\ell+1} e^{- \frac{x \ell(k+1)}{k}}\sin\left(
    \frac{\pi \ell(k-1)}{k}\right) \Gamma\left(\frac{\ell(k+1)}{k}\right)}{\ell !}
  -\frac{k+1}{k} \] is an entire function of $x$ which is $O(e^{-
    \frac{Lx(k+1)}{k}}).$ Noting that differentiating still keeps the same big-oh estimate finishes the proof.
\end{proof}
We have now proven what we set out to prove.  

\begin{proof}[Proof of Theorem \ref{thm:main}]
Theorem \ref{thm:main} now follows directly from \eqref{eqn:dedekind_asymptotic}, 
\eqref{eqn:relative_error_def}, \eqref{eqn:Rk_simplified}, and 
Proposition \ref{thm:W_moment_expansion}.
\end{proof}

\end{document}